\newcommand \datum {March 30, 2021 (for arXiv)}
\numberwithin{equation}{section}
\theoremstyle{plain}
 \newtheorem{theorem}{Theorem}[section]
 \newtheorem{lemma}[theorem]{Lemma}
 \newtheorem{proposition}[theorem]{Proposition}
\theoremstyle{definition}
 \newtheorem{definition}[theorem]{Definition}
 \newtheorem{remark}[theorem]{Remark}
\newenvironment{enumeratei}{\begin{enumerate}[\quad\upshape (i)]} {\end{enumerate}}
\newcommand \relE {\mathrel E}
\newcommand \FDthree {\textup{FD}(3)}
\newcommand \lang [1] {\textup{Lng}(#1)}
\newcommand \Pset {P}
\newcommand \then {\Rightarrow}
\newcommand \ordsigma  {{\sigma_{\textup{ord}}}}
\newcommand \grsigma  {{\sigma_{\textup{gr}}}}
\newcommand \fthen  {\mathrel{\models_{\textup{fin}}}}
\newcommand \alg[1] {\mathcal{#1}}
\newcommand \lalg[1] {\mathcal{#1}}
\newcommand \Id[1] {\textup{Dset}(#1)}  
\newcommand \tbf[1] {\textbf{#1}}  
\newcommand \Jir [1] {J(#1)} 
\newcommand \aJir [1] {\alg J(#1)} 
\newcommand \Nnul {\mathbb N_0}
\newcommand \Nplu {\mathbb N^+}
\renewcommand \phi{\varphi}
\DeclareMathOperator{\Con}{Con}
\newcommand \set [1]{\{#1\}}
\newcommand \tuple [1] {\langle #1 \rangle}
\newcommand \pair [2] {\tuple{#1,#2}}
\newcommand \LEnl [1] {\textup{LeftEnl}(#1)}
\newcommand \REnl [1] {\textup{RightEnl}(#1)}
\newcommand \maxJir [1] {\textup{Max}(\mathcal J(#1))}
\newcommand \ideal [1] {\mathord{\downarrow} #1}
\newcommand \rjir [2] {\rho_{\textup{ji}}(#1,#2)}
\newcommand \rmjir [2] {\rho_{\textup{mji}}(#1,#2)}
\newcommand \rmset [3] {\rho_{\textup{V}}(#1,#2,\,\,#3)}
\newcommand \rmmset [5] {\rho_{\textup{W}}(#1,#2,#3,#4,\,\,#5)}
\newcommand \rmmm [1] {\rho_{\textup{VW}}(#1)}
\newcommand \rmcyclic [1] {\rho_{\textup{mcycl}}(#1)}
\newcommand \orjir [2] {\rho^{\ast}_{\textup{jir}}(#1,#2)}
\newcommand \ormjir [2] {\rho^{\ast}_{\textup{Mjir}}(#1,#2)}
\newcommand \ormmm [1] {\rho^{\ast}_{\textup{VW}}(#1)}
\newcommand \ormcyclic [1] {\rho^{\ast}_{\textup{mcycl}}(#1)}
\newcommand \defiff {\overset{\textup{def}}{\iff}}
\newcommand \mathand {\mathop{\&}}
\newcommand\red[1]{{\textcolor{red}{#1}}}
\begin{document}

\title[Cyclic congruences and non-finite axiomatizability]{Cyclic congruences of slim semimodular lattices and non-finite axiomatizability of some finite structures}

\author[G.\ Cz\'edli]{G\'abor Cz\'edli}
\email{czedli@math.u-szeged.hu}
\urladdr{http://www.math.u-szeged.hu/~czedli/}
\address{University of Szeged, Bolyai Institute. 
Szeged, Aradi v\'ertan\'uk tere 1, HUNGARY 6720}

\begin{abstract} We give a new proof of the fact that  finite bipartite graphs cannot be axiomatized by finitely many first-order sentences among \emph{finite} graphs. 
(This fact is a consequence of a general theorem proved by L.\ Ham and M.\ Jackson, and the counterpart of this fact for all bipartite graphs in the class of \emph{all} graphs is a well-known consequence of the compactness theorem.) 
Also, to exemplify that our method is applicable in various fields of mathematics, we prove that neither finite simple groups, nor the ordered sets of join-irreducible congruences of slim semimodular lattices can be described by finitely many axioms in the class of \emph{finite} structures.
Since a 2007 result of  G.\ Gr\"atzer and E.\ Knapp, slim semimodular lattices have constituted the most intensively studied part of lattice theory and they have already led to results even in group theory and geometry. In addition to the non-axiomatizability results mentioned above, we present a new property, called Decomposable Cyclic Elements Property, of the congruence lattices of  slim semimodular lattices. 
\end{abstract}

\thanks{This research was supported by the National Research, Development and Innovation Fund of Hungary under funding scheme K 134851.}

\subjclass {03C13, 06C10}


%
\dedicatory{Dedicated to B\'ela Cs\'ak\'any on his ninetieth birthday (2022)}

\keywords{Finite model theory, non-finite axiomatizability, finite axiomatizability, finite bipartite graphs, finite simple group, join-irreducible congruence, congruence lattice, slim  semimodular lattice, finite propositional logic, first-order inexpressibility, first-order language}

\date{\datum\hfill{\red{\tbf{Hint:} check the author's web page for possible updates}}}

\maketitle

\section{Introduction}\label{sect:intro} 

\subsection{Outline and prerequisites} Section~\ref{sect:intro} is introductory.  
Each of Sections~\ref{sectionmodels} and \ref{sectiongroups} gives an example how to apply the well-known tools of model theory for simple problems on finite axiomatizability among \emph{finite} structures. These two sections as well as Section~\ref{sect:intro}  are
easy to understand for all mathematicians and even their proofs are readable for those who have ever met the concept of ultraproducts. 
Sections~\ref{sectionlatt} and \ref{sect:morelatt} are intended for lattice theorists and rely heavily on a recent paper, Cz\'edli~\cite{czglamps}. The results of Sections~\ref{sectionlatt} and \ref{sect:morelatt} are summarized at the end of (this) Section~\ref{sect:intro}.  

\subsection{Finite model theory}
Finite model theory is a thriving part of mathematics. This is witnessed by, say, the monograph Libkin~\cite{libkin} with its 250 references or by the fact that, at the time of writing, MathSciNet
returns seven matches to the search ``Title=(finite model theory) AND Publication Type=(Books)''. However, 
 the following words of Fagin~\cite[page 4]{fagin} from 1993 are still valid: ``almost none of the key theorems and tools of model theory, such as the completeness theorem and the compactness theorem, apply to finite structures''. 
This could be the reason that the problems of finite model theory are harder than those of (the classical and  unrestricted) model theory.

\subsection{Our goal} 
This paper  deals with the axiomatizability of three different classes of finite structures.  We  prove that none of these three classes can be defined by a finite set of first-order sentences within the class of \emph{finite} structures. 

In case of the first two classes, our goal is to point out that even if the well-known classical methods of classical model theory are usually too weak for finite structures, these methods are still applicable in lucky cases.

The first class consists of all \emph{finite bipartite graphs}. While it is a trivial consequence of the compactness theorem that the class of all (not necessarily finite) bipartite graphs is not finitely axiomatizable, the finite case is a bit more involved.
In Section~\ref{sectionmodels}, we prove that  finite bipartite graphs cannot be finitely axiomatized among finite structures. Although this statement is only a very particular case of Ham and Jackson~\cite[Corollary 4.3]{hamjackson}, the point is that while Ham and Jackson's quite involved proof uses a heavy machinery, our approach is elementary and much simpler. Note that, for the reader's convenience, Section~\ref{sectionmodels} recalls some known facts from finite model theory.

The second class consists of all \emph{finite simple groups}. 
The result that Section~\ref{sectiongroups} presents on this class is quite easy and not at all surprising; the aim of Section~\ref{sectiongroups} is to point out that, sometimes, classical model theory is applicable for \emph{finite} algebras in a variety if  powerful theorems hold in the variety.

In case of the third class, whose definition with an appropriate introduction is postponed to Section~\ref{sectionlatt}, our result is the opposite of what has previously been conjectured. Here we only note that Section~\ref{sectionlatt}, containing one of the two theorems of the  of the paper, belongs to the most intensively studied part of lattice theory.

Related to the third class but dealing with a different one, 
 in Section~\ref{sect:morelatt} 
presents a new property under the name \emph{Decomposable Cyclic Elements Property} of the congruence lattices of slim semimodular lattices; see Theorem~\ref{thm:newproptY} for the statement.
Also, Section~\ref{sect:morelatt}  points out why our approach is not appropriate to prove a  non-axiomatizability result about the lattices of congruences (rather than the posets of join-irreducible congruences) of these lattices.
Notably, the Decomposable Cyclic Elements Property would not have been discovered without the finite model theoretic Section~\ref{sectionlatt}.

The content of Sections~\ref{sectionlatt} and \ref{sect:morelatt},  the lattice theoretical sections, can be summarized as follows.

\begin{enumeratei}
\item The \emph{posets} of join-irreducible congruences of slim semimodular lattices cannot be axiomatized by finitely many axioms among \emph{finite}  structures; see Theorem~\ref{thmnonax}.
\item One of the known properties of the above-mentioned \emph{posets}  cannot be given  by finitely many axioms among \emph{finite} structures; see Remark~\ref{rem:nWhWrlNsW}.
\item All what we currently know about  the \emph{lattices} of congruences of slim semimodular lattices, including (the new) Decomposable Cyclic Elements Property, can be described by finitely many axioms (in fact, even by a single axiom) among \emph{finite} lattices; see Remark~\ref{rem:sNldGsxm}.
\item We do not know whether the \emph{lattices} of congruences of slim semimodular lattices can be axiomatized by finitely many axioms among \emph{finite} lattices; see Remark~\ref{rem:szplMndHcKzgTrs}.
\end{enumeratei}

\section{Non-finite axiomatizability of bipartite graphs}\label{sectionmodels}

We begin this section with recalling some known concepts and facts; they will also be needed in the subsequent sections.
By a \emph{finite signature} we mean a tuple
\begin{equation}
\sigma=\bigl\langle p, q, \tuple{R_1,r_1},\dots \tuple{R_p,r_p}, 
\tuple{F_1,f_1},\dots \tuple{F_q,f_q}\bigr\rangle
\label{eqsignature}
\end{equation}
where $p,q\in\Nnul=\set{0,1,2,\dots}$,  
$R_1,\dots,R_p$ are \emph{relation symbols}, $F_1,\dots,F_q$ are \emph{function symbols}, and these symbols are of arities $r_1$, \dots, $r_p$, $f_1$, \dots, $f_q\in\Nnul$, respectively.  A \emph{structure of type} $\sigma$ or, shortly, a \emph{$\sigma$-structure} is a $(1+p+q)$-tuple
\[\alg A=\tuple{A,R_1^{\alg A},\dots, R_p^{\alg A}, F_1^{\alg A},\dots, F_q^{\alg A}}\text{ or, shortly, }
\tuple{A,R_1,\dots, R_p, F_1,\dots, F_q},
\]
where   $A$, called the \emph{underlying set}, is a nonempty set, $R_i^{\alg A}\subseteq A^{r_i}$ is a relation, and $F_j^{\alg A}\colon A^{f_j}\to A$ is a map for all $i\in\set{1,\dots,p}$ and $j\in\set{1,\dots,q}$. Structures will often be denoted by calligraphic capital letters $\alg A$, $\alg B$, \dots{} while their underlying sets with the corresponding italic capitals $A$, $B$, \dots{} .  (However, sometimes we denote structures simply by their underlying sets.) Note that $x\in \alg A$ will mean that $x\in A$ and similarly for other structures.
In this paper, 
\begin{equation}
\parbox{7.5cm}{the first-order language with equality determined by $\sigma$ will be denoted by $\lang\sigma$.}
\label{eqtxtFslNg}
\end{equation}
So, in addition to the relation symbols and function symbols  occurring in \eqref{eqsignature},
$\lang\sigma$ includes the equality symbol, which is always interpreted as the equality relation. 
To define the (first-order) \emph{consequence relation modulo finiteness}, denoted by $\fthen$, assume that 
$\Phi$ is a set of $\lang\sigma$-sentences and $\mu$ is an $\lang\sigma$-sentence. Then 
\begin{equation}
 \text{$\mu$ is a \emph{consequence of $\Phi$ modulo finiteness}, in notation }  \Phi\fthen \mu,
\end{equation}
if every \emph{finite} $\sigma$-structure that satisfies all sentences belonging to $\Phi$ also satisfies $\mu$.
To see an example, we borrow the  sentence 
\begin{equation}
\lambda_k: \exists x_1\dots\exists x_k\bigwedge_{1\leq i<j\leq n}\neg(x_i=x_j)
\label{eqlambdak}
\end{equation}
for $k\in\Nplu=\set{1,2,3,\dots}$
from Fagin~\cite{fagin}. Here $\neg$ is the negation sign. 
Let $\lambda_{-1}$ be
the (identically false) sentence $\exists x (x=x{} \wedge \neg(x=x))$. Clearly, $\Phi\fthen \lambda_{-1}$ but there is no finite subset $\Phi'$ of $\Phi$ such that $\Phi'\fthen\lambda_{-1}$. This example shows well how big the difference between $\fthen$ and the usual consequence relation (for not necessarily finite structures) is.

\begin{definition}\label{def:grSnTr}
The signature $\grsigma$ of  graphs is the particular cases of 
 \eqref{eqsignature} such that $\pair p q=\pair 1 0$ 
 and we write $E$ and $x\relE y$ instead of $R_1$ and $\pair x y\in R_1^{\alg A}$; the latter means that $\alg A=\tuple{A,E}$ is a  (directed) graph, $x,y\in A$ are vertices, and there is an edge from  $x$ to  $y$. Graphs satisfying the sentence $\forall x\forall y(x\relE y\then y\relE x)$ are \emph{undirected}. An undirected graph $\alg A$ is \emph{bipartite} if 
 there are disjoint nonempty subsets $A_0$ and $A_1$ of $A$ such that $A=A_0\cup A_1$ and 
$E^{\alg A}\subseteq (A_0\times A_1) \cup (A_1\times A_0)$. 
\end{definition}

Using   \eqref{eqtxtFslNg} and Definition~\ref{def:grSnTr},
we present the following statement, which is only a very particular case of Jackson~\cite[Corollary 4.3]{hamjackson}.

\begin{proposition}\label{propositionbipartite} The class of finite bipartite graphs is not finitely axiomatizable modulo finiteness. That is, there exists no finite set $\Sigma$ of\,  $\lang\grsigma$-sentences such that a \emph{finite} graph $\alg A$ is bipartite if and only if each member of\, $\Sigma$ holds in $\alg A$. 
\end{proposition}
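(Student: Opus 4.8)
The plan is to argue by contradiction, using the classical fact that a \emph{finite} graph is bipartite precisely when it has no cycle of odd length, together with \L o\'s's theorem on ultraproducts. Suppose, for contradiction, that a finite set $\Sigma$ of $\lang\grsigma$-sentences axiomatizes the finite bipartite graphs modulo finiteness. Replacing $\Sigma$ by the conjunction of its members, we obtain a single $\lang\grsigma$-sentence $\mu$ such that, for every finite graph $\alg A$, we have $\alg A\models\mu$ if and only if $\alg A$ is bipartite.

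For $m\geq 3$ let $C_m$ denote the undirected $m$-cycle, that is, the graph on $\set{0,1,\dots,m-1}$ whose (symmetric) edge relation $\relE$ makes $i$ and $i+1 \pmod m$ adjacent and has no other edges. Since a finite graph is bipartite exactly when it contains no cycle of odd length, each even cycle $C_{2n}$ is bipartite while each odd cycle $C_{2n+1}$ is not. Consequently $C_{2n}\models\mu$ and $C_{2n+1}\models\neg\mu$ for all sufficiently large $n$.

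Now fix a non-principal ultrafilter $U$ on $\Nplu$ and form the two ultraproducts $\alg B=\prod_U C_{2n}$ and $\alg B'=\prod_U C_{2n+1}$. By \L o\'s's theorem, $\mu$ holds in $\alg B$ and fails in $\alg B'$, so $\alg B$ and $\alg B'$ are not elementarily equivalent and, in particular, not isomorphic. The plan is to contradict this by showing $\alg B\cong\alg B'$. Each of the following properties is expressible in $\lang\grsigma$ and holds in all but finitely many of the finite cycles, hence---by \L o\'s's theorem---in each ultraproduct: the edge relation is symmetric and irreflexive; every vertex has exactly two neighbours; and, for each fixed $\ell\geq 3$, there is no cycle of length $\ell$ (indeed $C_m$ has a single cycle, of length $m$, and $\set{m:m=\ell}\notin U$). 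A graph in which every vertex has degree $2$ and which contains no finite cycle is a disjoint union of two-way infinite paths, each connected component being isomorphic to the graph on $\mathbb Z$ with consecutive integers adjacent. Finally, both $\alg B$ and $\alg B'$ have cardinality $2^{\aleph_0}$ while each component is countable, so both have exactly $2^{\aleph_0}$ components; being disjoint unions of $2^{\aleph_0}$ copies of the same countable path, they are isomorphic. This contradiction completes the proof.

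I expect the main obstacle to lie in the last step: identifying the common structure of the two ultraproducts as a disjoint union of $\mathbb Z$-paths---which rests on transporting ``degree $2$'' and ``no $\ell$-cycle for every fixed $\ell$'' through \L o\'s's theorem---and then counting connected components to upgrade elementary inequivalence into an outright isomorphism. An alternative to the cardinality count would be a direct back-and-forth argument showing that any two disjoint unions of infinitely many $\mathbb Z$-paths are elementarily equivalent, which already contradicts $\alg B\models\mu$ together with $\alg B'\models\neg\mu$.
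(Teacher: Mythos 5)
Your proposal is correct and follows essentially the same route as the paper: ultraproducts of even cycles and of odd cycles over a non-principal ultrafilter, identification of both ultraproducts (via \L o\'s's theorem) as disjoint unions of continuum many $\mathbb Z$-chains, hence isomorphic, contradicting that the purported axiomatizing sentence holds in one and fails in the other. The only cosmetic differences are that you index both families over $\Nplu$ instead of over the two sets of cycle lengths, and that you assert the continuum cardinality of the ultraproducts directly where the paper cites Frayne--Morel--Scott/Keisler together with the sentences $\lambda_k$.
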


Before proving this proposition, we make some comments. Although our aim with Sections~\ref{sectionmodels} and \ref{sectiongroups} is to give \emph{simple proofs}, note that  in most cases, axiomatizability results about  finite structures are proved by the methods offered by the theory of \emph{Ehrenfeucht--Fra\"\i ss\'e Games}. This theory, which is much more complicated than our approach in this paper, goes back to Ehrenfeucht~\cite{ehrenfeucht} and Fra\"\i ss\'e~\cite{fraisse}. For more information on Ehrenfeucht-Fra\"\i ss\'e Games, which will not occur in the rest of the paper, we can recommend the monographs Immerman~\cite{immerman} and Libkin~\cite{libkin}.

Our approach is similar to that of Dittmann~\cite{dittmann} since both approaches use ultraproducts. According to MathSciNet, \cite{dittmann} has not appeared in a journal and it is more or less forgotten. Instead of ultraproducts, one could develop Libkin's idea to use the L\"owenheim--Skolem Theorem; see the proof of
\cite[Proposition 3.3]{libkin}. (While Libkin used his Proposition 3.3  to show how narrow the scope of classical model theory for finite structures is, this paper shows that this scope is less narrow.)

\begin{proof}[Proof of Proposition~\ref{propositionbipartite}] For $2\leq n\in\Nplu$, the \emph{circle of length} $n$ is the graph $\alg C_n$ with base set  $C_n:=\set{0,1,\dots,n-1}$ and $E^{\alg C_n}=\set{\pair x y:  |x-y|\in\set{1,n-1}}$. 
\begin{equation} 
\text{Let }\, J_0:=\set{4,6,8,10,12,14,\dots } \,\text{ and }\, J_1:=\set{3,5,7,9,11,13,\dots }. 
\label{eqwMnJsdef}
\end{equation}
Note that $\alg C_n$ is bipartite for all $n\in J_0$ but $\alg C_n$ is not bipartite if $n\in J_1$. The set of all subsets of $J_i$ will be denoted by $\Pset(J_i)$. 
For $i\in\set{0,1}$, let $U_i$ be a nontrivial ultrafilter over $J_i$;
see, for example, Poizat~\cite{poizat} for this concept. 
What we need here is that  $\emptyset\notin U_i\subseteq\Pset(J_i)$  and $U_i$  contains all cofinite subsets of $J_i$; a subset $X\subseteq J_i$ is \emph{cofinite} if $J_i\setminus X$ is finite. Let $\alg A_i=\tuple{A_i, E}$ be the ultraproduct $\prod_{n\in J_i}\alg C_n/U_i$. 
We know from  Frayne, Morel, and Scott~\cite{frayneatal} or from Keisler~\cite{keisler} that
\begin{equation}
\parbox{10.0cm}{an ultraproduct of finite structures modulo a nontrivial ultrafilter is either finite, or it has at least continuum many elements.}
\label{eqpbxfMsKtTsl}
\end{equation}
For $i\in\set{0,1}$, $k\in \Nplu$, and $\lambda_k$ defined in  \eqref{eqlambdak}, 
$\{n\in J_i: \lambda_k$ holds in $\alg C_n\}$ is a cofinite set, whereby it belongs to $U_i$. 
Hence, $\lambda_k$ holds in $\alg A_i$ by Lo\'s's Theorem; see, for example, Theorem 4.3 in Poizat~\cite{poizat}. Since this is true for all $k\in\Nplu$,  $\alg A_i$ is not finite. Also, it has at most continuum many elements since the cardinality of the direct product $\prod_{n\in J_i}\alg C_n$ is  continuum. Thus \eqref{eqpbxfMsKtTsl} gives that, for $i\in\set{0,1}$,
\begin{equation}
\text{the cardinality of $\alg A_i$ is continuum; in notation, $|A_i|=2^{\aleph_0}$.} 
\label{eqtxtCntNm}
\end{equation}
Note that the subsequent sections will reference \eqref{eqtxtCntNm} in connection with other structures defined by similar ultraproducts of finite structures. 

The \emph{$\mathbb Z$-chain} is the graph  $\alg C_\infty$ with the set $C_\infty:=\mathbb Z$ of integer numbers as vertex set and $E^{\alg C_\infty}:=\set{\pair x y:  |x-y|=1}$. There is an $\lang\grsigma$-sentence expressing that for every element $x$ there are exactly two elements $y$ such that $xEy$. Apart from $n=2$, this sentence holds in all $\alg C_n$. Hence, by Lo\'s's Theorem again, this sentence holds in $\alg A_0$ and $\alg A_1$. 
This yields that $\alg A_i$ is the disjoint union of copies of $\mathbb Z$-chains and circles $\alg C_k$, $3\leq k\in\mathbb N$.  However, for each $3 \leq k\in\Nplu$, there is an $\lang\grsigma$-sentence expressing that $\alg C_k$ is not a subgraph. This sentence holds in $\alg C_n$ for all $n$ belonging to the cofinite set $J_i\setminus\set{k}$, whereby Lo\'s's Theorem gives that this sentence also holds in $\alg A_i$. Hence, $\alg A_i$ contains no circle. Consequently, for $i\in\set{0,1}$, there is a cardinal number $\kappa_i$ such that $\alg A_i$ is the disjoint union of $\kappa_i$ many copies of $\mathbb Z$-chains.  Combining $|C_\infty|=\aleph_0$ with \eqref{eqtxtCntNm}, it follows that $\kappa_0=2^{\aleph_0}=\kappa_1$. Thus, $\alg A_0$ and $\alg A_1$ are isomorphic graphs; in notation, $\alg A_0\cong \alg A_1$. 

Next, for the sake of contradiction, suppose that Proposition~\ref{propositionbipartite} fails. Then, using that finitely many sentences can always be replaced by their conjunction, there exists a single sentence $\phi$ such that for every finite graph $\alg B$, $\phi$ holds in $\alg B$ if and only if $\alg B$ is a bipartite graph. In particular,
\begin{equation}
\text{$\phi$ holds in $\alg C_n$ for all $n\in J_0$ but it fails in $\alg C_m$ for all $m\in J_1$.}
\label{eqtxtffZfgSr}
\end{equation}
By Lo\'s's Theorem, $\phi$ holds in $\alg A_0$. Hence, by the isomorphism $\alg A_0\cong \alg A_1$,  $\phi$ holds in $\alg A_1$, too. 
Using Lo\'s's Theorem again, we obtain that the set $\{m\in J_1: \phi$ holds in $\alg C_m\}$ belongs to the ultrafilter $U_1$. This contradicts the fact that this set is empty by \eqref{eqtxtffZfgSr}, completing the proof of Proposition~\ref{propositionbipartite}.
\end{proof}

It is worth comparing Proposition~\ref{propositionbipartite} with
 the following folkloric fact. 

\begin{remark}\label{remarkszGtmQ}Let $\sigma$ be a finite signature.  If $\alg K$ is a class of 
 \emph{finite} $\sigma$-structures such that it is closed with respect to taking isomorphic copies, then there exists a
set $\Phi$ of $\lang\sigma$-sentences such that a finite structure belongs to $\alg K$ if and only if it satisfies every member of $\Phi$. 
\end{remark}

\begin{proof}[Proof of Remark~\ref{remarkszGtmQ}]
For each $k\in\Nplu$,  $\alg K$ contains finitely many $k$-element structures (up to isomorphism). Hence there is an $\lang\sigma$-sentence $\nu_k$ that holds exactly in the $k$-element structures of $\alg K$. Thus, we can let 
 $\Phi:=\set{\lambda_k\then\nu_k: k\in\Nplu}$.
\end{proof}

\section{Groups}\label{sectiongroups}
Using the terminology of  Proposition~\ref{propositionbipartite}, we have the following statement.

\begin{proposition}\label{propositiongroups} The class of finite simple groups is not finitely axiomatizable modulo finiteness.
\end{proposition}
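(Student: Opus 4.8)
The plan is to mimic the proof of Proposition~\ref{propositionbipartite}: I would look for a family of finite \emph{simple} groups and a family of finite \emph{non-simple} groups whose ultraproducts, formed modulo one and the same nontrivial ultrafilter, turn out to be isomorphic. Once such families are in hand, no single first-order sentence — and hence, since finitely many sentences can be replaced by their conjunction, no finite set of sentences — can separate the two families modulo finiteness. For the two families I would take the (additive) groups $\mathbb Z/p\mathbb Z$ and $\mathbb Z/p^2\mathbb Z$, both indexed by the set $\mathbb P$ of all primes. Each $\mathbb Z/p\mathbb Z$ is a finite simple group, since it has no proper nontrivial subgroup at all, whereas each $\mathbb Z/p^2\mathbb Z$ is finite and not simple, its unique subgroup of order $p$ being proper, nontrivial, and (as the groups are abelian) normal.

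First I would fix a nontrivial ultrafilter $U$ over $\mathbb P$ containing all cofinite sets, and form the ultraproducts $\alg G:=\prod_{p\in\mathbb P}\mathbb Z/p\mathbb Z\,/\,U$ and $\alg H:=\prod_{p\in\mathbb P}\mathbb Z/p^2\mathbb Z\,/\,U$. The heart of the matter is to identify both of them. For each fixed integer $m\geq 2$, the first-order sentence asserting the existence of a non-identity $x$ with $mx=0$ holds in $\mathbb Z/p\mathbb Z$ exactly when $p\mid m$, i.e.\ for only finitely many $p$; the same is true for $\mathbb Z/p^2\mathbb Z$. By Lo\'s's Theorem both $\alg G$ and $\alg H$ are therefore torsion-free. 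Dually, for each $m\geq 1$ the sentence asserting that multiplication by $m$ is surjective holds in $\mathbb Z/p\mathbb Z$ (resp.\ $\mathbb Z/p^2\mathbb Z$) whenever $p\nmid m$, i.e.\ for all but finitely many $p$, so Lo\'s's Theorem makes both $\alg G$ and $\alg H$ divisible. Finally, exactly as \eqref{eqtxtCntNm} was derived from \eqref{eqpbxfMsKtTsl}, both ultraproducts are infinite and of cardinality at most continuum, hence of cardinality precisely $2^{\aleph_0}$.

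A torsion-free divisible abelian group is the same thing as a vector space over $\mathbb Q$, and such a space is determined up to isomorphism by its dimension; a $\mathbb Q$-vector space of cardinality $2^{\aleph_0}$ has dimension $2^{\aleph_0}$. This is the ``powerful theorem holding in the variety'' of abelian groups that lets classical model theory bite here; equivalently one could invoke the uncountable categoricity of the complete theory of nontrivial torsion-free divisible abelian groups, in which case mere elementary equivalence of $\alg G$ and $\alg H$ would already suffice for the contradiction below. Consequently $\alg G\cong\alg H$. Now, for the sake of contradiction, suppose a single sentence $\phi$ in the signature of groups defined finite simple groups among finite groups. Then $\phi$ holds in every $\mathbb Z/p\mathbb Z$ and fails in every $\mathbb Z/p^2\mathbb Z$, so Lo\'s's Theorem yields $\alg G\models\phi$ and $\alg H\models\neg\phi$, which is impossible since $\alg G\cong\alg H$. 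This contradiction proves Proposition~\ref{propositiongroups}.

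The step I expect to be the main obstacle is the choice of the two families and the verification that their ultraproducts coincide; it is not a priori evident that a family of simple groups and a family of non-simple groups can share an ultraproduct. Passing to \emph{abelian} groups of prime and of prime-squared order is precisely what tames the ultraproduct, because the structure theory of abelian groups pins both $\alg G$ and $\alg H$ down to the one isomorphism type of a continuum-dimensional $\mathbb Q$-vector space. Everything else — the two applications of Lo\'s's Theorem and the cardinality bookkeeping imported from \eqref{eqpbxfMsKtTsl}--\eqref{eqtxtCntNm} — is routine.
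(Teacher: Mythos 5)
Your proof is correct and follows essentially the same route as the paper: both form ultraproducts of cyclic groups of prime order (simple) and of composite order (non-simple) over a nontrivial ultrafilter, use Lo\'s's Theorem to show that both ultraproducts are torsion-free divisible abelian groups of cardinality $2^{\aleph_0}$, hence isomorphic $\mathbb Q$-vector spaces, and derive the same contradiction. The only difference is immaterial: the paper takes the non-simple family to be $\alg C_{q_j}$ with $q_j=p_jp_{j+1}$, whereas you take $\mathbb Z/p^2\mathbb Z$; either choice works, and your closing observation that elementary equivalence (via categoricity of the theory of nontrivial torsion-free divisible abelian groups) would already suffice matches the paper's own remark following its proof.
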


\begin{proof}Since lots of arguments used in Proposition~\ref{propositionbipartite} apply here, we give less details.
According to \eqref{eqsignature}, 
the signature $\grsigma$ for groups is chosen so that  $p=0$, $q=1$, $f_1=2$, and $F_1$ is ``+''.
For the sake of contradiction, suppose that there exists an $\lang\grsigma$-sentence $\phi$ that holds in all finite simple groups but it fails in all finite non-simple groups. For $n\in\Nplu$, the cyclic group of order $n$ will be denoted by $\alg C_n$. Let $p_1<p_2<p_3< \dots$ be the list of all prime numbers,  and define $q_j:=p_jp_{j+1}$ for $j\in \Nplu$. Take a nontrivial ultrafilter $U$ over $\Nplu$. Let $\alg A_0$ and $\alg A_1$ be the ultraproducts
$\prod_{n\in \Nplu}\alg C_{p_i}/U$ and $\prod_{n\in \Nplu}\alg C_{q_i}/U$, respectively.  Observe that \eqref{eqtxtCntNm} is still valid; see the sentence right after it.
For $k\in\Nplu$, define the following sentence of $\lang\grsigma$
with $k$ occurrences of $y$: 
\[\eta_k: \qquad \forall x\exists y\bigl((\dots(y+y)+y)+\dots)+y=x).
\]
Basic facts about linear congruences yield that 
the sets $\{n\in\Nplu: \eta_k$  holds in $\alg C_{p_n}\}$ and $\{n\in\Nplu: \eta_k$  holds in $\alg C_{q_n}\}$ are cofinite and so they belong to $U$. Similarly, with
$k+1$ occurrences of $x$ before the first equality sign, if we define
\[\tau_k: \qquad \forall x\Bigl(\bigl((\dots(x+x)+x)+\dots)+x=x\, \then \,\forall y (x+y=y) \Bigr),
\]
then both $\{n\in\Nplu: \tau_k$  holds in $\alg C_{p_n}\}$ and $\{n\in\Nplu: \tau_k$  holds in $\alg C_{q_n}\}$ are cofinite and belong to $U$. Hence, by Lo\'s's Theorem, $\eta_k$ and $\tau_k$ hold in $\alg A_i$ for all $k\in\Nplu$ and $i\in\set{0,1}$. 
Therefore, the abelian groups $\alg A_0$ and $\alg A_1$ are torsion-free (by the sentences $\tau_k$) and divisible (by the $\eta_k$).  Consequently, they are direct sums of copies of the additive group $\tuple{\mathbb Q,+}$ of rational numbers; see, for example, Kurosh~\cite[page 165]{kurosh} or use the straightforward fact that a torsion-free and divisible abelian group can be considered a vector space over the field of rational numbers. \eqref{eqtxtCntNm} implies  that each of $\alg A_0$ and $\alg A_1$ has  $2^{\aleph_0}$-many direct summands. Hence, $\alg A_0\cong \alg A_1$.

For all $n\in\Nplu$,  $\alg C_{p_n}$ is a simple group and so it satisfies $\phi$. Lo\'s's Theorem gives that $\phi$ holds in $\alg A_0$, whereby it holds in $\alg A_1$ since $\alg A_1\cong\alg A_0$.  Using Lo\'s's Theorem again, we obtain that the set $I:=\{n\in\Nplu: \phi$ holds in $\alg C_{q_n}\}$ belongs to the ultrafilter $U$. But none of the groups $\alg C_{q_n}$ is simple, so none of them satisfies $\phi$, whence $I=\emptyset$. The contradiction $\emptyset=I\in U$ completes the proof of Proposition~\ref{propositiongroups}.
\end{proof}

Note that \eqref{eqtxtCntNm} and the structure theorem of torsion-free divisible abelian groups in the proof above were only used to conclude that $\alg A_0\cong \alg A_1$, but this isomorphism was only needed to ensure that $\alg A_0$ and $\alg A_1$ are elementarily equivalent. There is another way to ensure this elementary equivalence that relies neither on  \eqref{eqtxtCntNm}, nor on the above-mentioned structure theorem: one can use the description of elementary equivalence of abelian groups given by Szmielew~\cite{szmielew}. However, the use of this description would require further $\lang\grsigma$-sentences and would make the proof more complicated.

\section{The ordered sets of join-irreducible congruences of slim semimodular lattices}\label{sectionlatt}

\subsection*{Brief introduction to slim semimodular lattices}
We assume that the reader has some basic familiarity with lattices; if not then a few parts of Burris and Sankappanvar~\cite{burrissank} or  Davey and Priestley~\cite{daveypriestley} or Gr\"atzer~\cite{GrGLT} are recommended.

A lattice $\alg L=\tuple{L;\vee, \wedge}$ is \emph{semimodular} if for any $x,y,z\in L$, the covering relation $x\prec y$ implies that $x\vee z\prec y\vee z$ or $x\vee z= y\vee z$. 
The  lattice $\alg L$ is \emph{slim} if it is finite and the (partially) ordered set $\alg J(\alg L)=\tuple{J(\alg L),\leq}$ of its join-irreducible elements is the union of two chains. We know from  Cz\'edli and Schmidt~\cite[Lemma 2.3]{czgschtJH} that for finite semimodular lattices, this definition of slimness is equivalent to the original one, which is due to Gr\"atzer and Knapp~\cite{GrKnI} but not recalled here. We also know from Cz\'edli and Schmidt~\cite[Lemma 2.2]{czgschtJH} that slim lattices are \emph{planar}; 
however, the term ``slim, planar, semimodular lattice'' frequently occurs in the literature since the original concept of slimness did not imply planarity. Here we write ``slim semimodular lattices'' and these lattices are automatically finite and planar. As usual, the set of congruence relations of a lattice $\alg L$ form a lattice, the \emph{congruence lattice} $\Con {\alg L}$ of $\alg L$. The study of congruence lattices of slim semimodular lattices began with  Gr\"atzer and Knapp~\cite{GrKn3}. These congruence lattices  $\Con {\alg L}=\tuple{\Con {\alg L},\leq}$  are distributive. Hence, by the classical structure theorem of finite distributive lattices, see Gr\"atzer~\cite[Theorem II.1.9]{GrGLT} for example, 
these congruence lattices are economically described by simpler and smaller structures: the ordered sets  $\alg J(\Con {\alg L})=\tuple{\Jir{{\Con\lalg L}},\leq}$  of their join-irreducible elements.

Several properties of the ordered sets $\alg J(\Con {\alg L})$ determined by  slim semimodular lattices $\alg L$ have been discovered; they are summarized in Cz\'edli~\cite{czglamps} and Cz\'edli and Gr\"atzer~\cite{czggg3p3c}.
In fact, the attempt to characterize these $\alg J(\Con {\alg L})$ served as an essential motive to deal with slim semimodular lattices. 
For  surveys of 
these lattices, see
the book chapter Cz\'edli and Gr\"atzer~\cite{czgggbch} and 
Section 2 of Cz\'edli and Kurusa~\cite{czgka}.
Here, as an appetizer to this section of the present paper,  we only mention that slim semimodular lattices were used to strengthen the Jordan--H\"older Theorem for groups from the nineteenth century, see Cz\'edli and Schmidt~\cite{czgschtJH} and Gr\"atzer and Nation~\cite{gratzernation}, and they have led to results in geometry, see 
Cz\'edli \cite{czgcharcirc}--\cite{czgcrossing} and  
Cz\'edli and Kurusa~\cite{czgka} together with the survey given in it.
Since  2007, when  G.\ Gr\"atzer and E.\ Knapp\cite{GrKnI}
introduced slim semimodular lattices, the study of these lattices has been the most intensive part of lattice theory. Indeed, at the time of writing, the MathSciNet search ``Anywhere=(slim and semimodular) AND pubyear in [2012 2021]'' returns 22 matches.

\subsection*{The result of this section and its proof}
In harmony with \eqref{eqsignature}, we assume that ordered sets are of type $\ordsigma=\bigl\langle 1,0, \tuple{\leq,2}\bigr\rangle$. Using this notation and \eqref{eqtxtFslNg}, we formulate our main result as follows.


\begin{theorem}\label{thmnonax}
The class of ordered sets of join-irreducible congruences of slim semimodular lattices is not finitely axiomatizable modulo finiteness. That is, there exists no finite set $\Phi$ of\,  $\lang\ordsigma$-sentences such that a finite ordered set $\alg S=\tuple{S,\leq}$ is isomorphic to the ordered set $\alg J(\Con{\alg L})=\tuple{J(\Con{\alg L}),\leq}$ of some slim semimodular lattice $\alg L$ if and only if all members of\, $\Phi$ hold in $\alg S$.
\end{theorem}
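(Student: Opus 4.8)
The proof will follow the same ultraproduct template used for Propositions~\ref{propositionbipartite} and \ref{propositiongroups}. The plan is to exhibit two sequences of finite ordered sets, $\tuple{\alg P_n}_{n\in\Nplu}$ and $\tuple{\alg Q_n}_{n\in\Nplu}$, such that each $\alg P_n$ \emph{is} isomorphic to $\alg J(\Con{\alg L})$ for some slim semimodular lattice $\alg L$, while no $\alg Q_n$ arises this way, and yet suitable ultraproducts $\alg A_0=\prod_n\alg P_n/U$ and $\alg A_1=\prod_n\alg Q_n/U$ are isomorphic (or at least elementarily equivalent). Once this is in place, the contradiction is obtained exactly as before: a hypothetical single axiom $\phi$ separating the two classes among finite structures would hold in every $\alg P_n$, hence in $\alg A_0$ by \L o\'s's Theorem, hence in $\alg A_1\cong\alg A_0$, hence (by \L o\'s again) in $\alg Q_n$ for a set of indices in $U$, contradicting that $\phi$ fails in every $\alg Q_n$.

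First I would consult Cz\'edli~\cite{czglamps} for the known combinatorial structure of the posets $\alg J(\Con{\alg L})$; the separating property should be one that is genuinely \emph{non-first-order among finite posets} but whose failure is witnessed by a single ``forbidden configuration'' that grows in size with $n$ (the poset analogue of the odd cycles $\alg C_n$, $n\in J_1$, in Proposition~\ref{propositionbipartite}). Concretely, I expect the relevant known property to involve cyclic elements or a parity/length condition on chains, so that $\alg P_n$ satisfies it while $\alg Q_n$ contains an ``odd'' obstruction of length proportional to $n$. The key point is to choose $\alg P_n$ and $\alg Q_n$ so that they agree on every \emph{local} first-order feature that survives to the ultraproduct: for each fixed $k$, the $\lang\ordsigma$-sentence saying ``the obstruction of size $k$ does not embed'' should hold in all but finitely many $\alg P_n$ and in all but finitely many $\alg Q_n$, so that by \L o\'s's Theorem both ultraproducts omit every finite obstruction and thus look the same.

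To secure $\alg A_0\cong\alg A_1$ I would argue, as in Proposition~\ref{propositionbipartite}, that each ultraproduct decomposes as a disjoint union of copies of a single infinite ``idealized'' poset (the poset analogue of the $\mathbb Z$-chain), with the number of copies controlled by cardinality. Invoking \eqref{eqpbxfMsKtTsl} and the fact that $|\prod_n P_n|\le 2^{\aleph_0}$, both $\alg A_0$ and $\alg A_1$ have cardinality exactly $2^{\aleph_0}$, so the two disjoint unions have $2^{\aleph_0}$ summands each and are therefore isomorphic. If a clean decomposition into copies of one infinite poset is not available, the fallback is to prove only elementary equivalence $\alg A_0\equiv\alg A_1$ directly, which is all the contradiction argument actually requires; the remark following Proposition~\ref{propositiongroups} shows this weaker route suffices.

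The main obstacle will be the lattice-theoretic input rather than the model theory: I must identify finite posets $\alg P_n$ that are \emph{provably} of the form $\alg J(\Con{\alg L})$ for slim semimodular $\alg L$, since the class of such posets is constrained by several nontrivial properties established in \cite{czglamps}, and simultaneously find $\alg Q_n$ just outside this class that still match the $\alg P_n$ on all finite obstructions. Verifying that $\alg P_n$ is realizable by some slim semimodular lattice (for which I would rely on the representation/characterization results quoted from \cite{czglamps}) and that the discriminating property is the \emph{only} first-order difference that could survive to the ultraproduct is where the real work lies; the ultrafilter argument itself is then routine and parallels the two preceding sections verbatim.
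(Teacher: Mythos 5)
You have correctly reconstructed the model-theoretic frame, and it is in fact the paper's: the paper takes both families to be crowns --- $\alg P_n:=\alg K_n$ for even $n\in J_0=\set{2,4,6,\dots}$ and $\alg Q_n:=\alg K_n$ for odd $n\in J_1=\set{3,5,7,\dots}$ --- shows via sentences forbidding each fixed finite crown that both ultraproducts are cardinal sums of $2^{\aleph_0}$ copies of the infinite fence (your ``idealized'' poset), concludes $\alg A_0\cong\alg A_1$, and derives the contradiction by Lo\'s's Theorem exactly as you describe.

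Nevertheless, your proposal has a genuine gap, and you locate it yourself (``where the real work lies''): nothing in it identifies the two families, proves that one of them lies inside the class, or proves that the other lies outside, and these three items are the actual mathematical content of the theorem; they cannot be discharged by ``consulting \cite{czglamps} for a suitable property''. Concretely, the paper needs (i) non-realizability of the odd crowns: if $\aJir{\lalg D}\cong\alg K_n$ with $n$ odd, then $\lalg D$ fails the Bipartite Maximal Elements Property (Definition~\ref{def:bzwBNrtmG}), whereas $\Con{\alg L}$ satisfies that property for every slim semimodular $\alg L$ by \cite[Corollary 3.4]{czglamps}; and (ii) realizability of the even crowns, which is not a quotable result but a construction: for each even $n$ the paper builds a lattice $\alg L_n$ by inserting $n$ forks, in a carefully chosen order, into cells of a grid (the direct square of a chain), and then invokes \cite[Theorem 3.7]{czgtrajcolor} to certify that $\alg L_n$ is slim semimodular and \cite[Lemma 2.11]{czglamps} to get $\alg K_n\cong\alg J(\Con{\alg L_n})$. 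Item (ii) is the step your plan cannot even begin, because the results you propose to rely on are \emph{necessary} conditions on $\alg J(\Con{\alg L})$ and can never certify that a given poset \emph{is} realizable; without it, one has no pair of families agreeing on all finite obstructions yet separated by membership in the class. A minor further point: your fallback of proving elementary equivalence ``directly'' has no poset analogue of Szmielew's theorem to lean on, so the cardinality-plus-fence-decomposition argument you sketch (and the paper executes) is the only route actually available.
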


\begin{proof} Suppose the contrary. Then, as in the proof of Proposition~\ref{propositionbipartite}, we can pick a single 
$\lang\ordsigma$-sentence $\phi$ such that a finite ordered set $\alg S$ satisfies $\phi$ if and only if $\alg S\cong \alg J(\Con{\alg L})$ for a slim semimodular lattice $\alg L$. For $2\leq n\in\Nplu$, the \emph{$n$-crown} $\alg K_n$  is the $2n$-element ordered set with maximal elements $a_0,a_1,\dots, a_{n-1}$ and minimal elements $b_0,b_1,\dots, b_{n-1}$ such that, for $i,j\in\set{0,1,\dots, n-1}$,  $b_i\leq a_j$ if and only if $i=j$ or $i+1\equiv j$ (mod $n$). For $n=8$,  $\alg K_n$ is drawn in Figure~\ref{figK8}.

\begin{figure}[ht]
\centerline
{\includegraphics[scale=1]{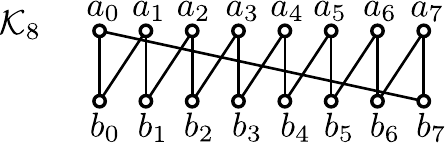}}
\caption{$K_8$}\label{figK8}
\end{figure}

\noindent
We let $J_0=\set{2,4,6,\dots}$ and $J_1=\set{3,5,7,\dots}$. 
Take a nontrivial ultrafilter $U_i$ over $J_i$. For $i\in\set{0,1}$, let $\alg A_i$ be the ultraproduct $\prod_{n\in J_i} \alg K_i/U_i$. 
We know from \eqref{eqtxtCntNm} and the sentence following it that $|A_0|=|A_1|=2^{\aleph_0}$. 
Although, to save space, we do not give all of them in details, we have the following $\lang\ordsigma$-formulas.
\begin{enumerate}
\item[$\alpha(x)$:] $\forall y(x\leq y \then y\leq x )$, which expresses that $x$ is a maximal element.
\item[$\beta(x)$:] $\forall y(y\leq x \then x\leq y )$, which expresses that $x$ is a minimal element.
\item[$\delta_1$:]  $\forall x$, exactly one of $\alpha(x)$ and $\beta(x)$ holds. 
\item[$\delta_2$:]  $\forall x$, if $\alpha(x)$, then there are exactly two elements $y$ such that $\beta(y)$ and $y\leq x$. 
\item[$\delta_3$:]  $\forall x$, if $\beta(x)$, then there are exactly two elements $y$ such that $\alpha(y)$ and $x\leq y$.
\item[$\xi_m$:] there are no elements forming a subset order isomorphic to $\alg K_m$.
\end{enumerate}
The ordered set
$\alg F=\tuple{\set{a_j: j\in\mathbb Z}\cup \set{b_j: j\in\mathbb Z},\leq}$  such that $\alpha(a_j)$, $\neg \beta(a_j)$, $\beta(b_j)$, and $\neg \alpha(a_j)$ for all $j\in \mathbb Z$ and, in addition, 
$b_j\leq a_s$ if and only if $s\in\set{j,j+1}$ will be called an (infinite) \emph{fence}; see Figure~\ref{figfence}.

\begin{figure}[ht]
\centerline
{\includegraphics[scale=1]{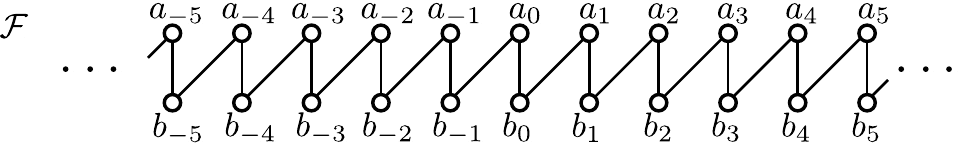}}
\caption{$\alg F$}\label{figfence}
\end{figure}

Recall that for ordered sets $\alg W_h=\tuple{W_h,\leq^h}$, $h\in H$, we obtain the \emph{cardinal sum} $\alg W=\tuple{W,\leq}$  of these ordered sets  by letting $W$ be the disjoint union of the $W_h$, $h\in H$, and defining  $\leq$ as the union of the $\leq^h$, $h\in H$.
Since $\delta_1$, $\delta_2$, and $\delta_3$ hold in  $\alg K_n$ for all $n\in J_0\cup J_1$ and, for each $m\geq 2$, so does $\xi_m$  for all $n\in (J_0\cup J_1)\setminus \set{m}$,   Lo\'s's Theorem yields that $\delta_1$, $\delta_2$, $\delta_3$, and, for all $m\in \Nplu\setminus\set 1$, $\xi_m$ hold in $\alg A_0$ and $\alg A_1$. Therefore, for $i\in\set{0,1}$, we conclude that 
each element of $\alg A_i$ belongs  to a unique fence and $\alg A_i$ is the cardinal sum of some copies, say $\kappa_i$ copies, of fences.  Using $|A_0|=2^{\aleph_0}=|A_1|$, we obtain that $\kappa_0=2^{\aleph_0}=\kappa_1$. Therefore, $\alg A_0\cong \alg A_1$.

\begin{figure}[ht]
\centerline
{\includegraphics[scale=1]{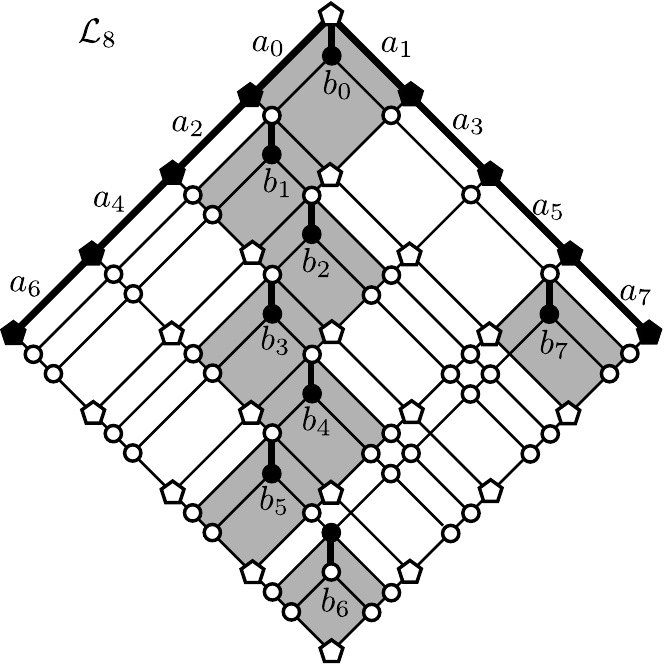}}
\caption{$\alg L_8$}\label{figL8}
\end{figure}

The rest of the proof relies heavily on Cz\'edli~\cite{czgtrajcolor} and mainly on \cite{czglamps}; these two papers\footnote{\red{Temporary note: see 
\texttt{ http://www.math.u-szeged.hu/\textasciitilde{}czedli/ } for their preprints.}} should be near. 
In particular, the  notation and the concepts not defined here are given there. However, we recall the following definition from Cz\'edli~\cite{czglamps}.

\begin{definition}[Bipartite Maximal Elements Property]\label{def:bzwBNrtmG}
A finite distributive lattice $\lalg D$ satisfies the \emph{Bipartite Maximal Elements Property} if $\maxJir {\lalg D}$ can be represented as the disjoint union of two nonempty subsets such that no two distinct elements in the same subset have a common lower cover (with respect to $\prec_{\aJir {\lalg D}}$) in the poset $\aJir {\lalg D}=\tuple{\Jir{\lalg D},\leq}$.
\end{definition}

We know from Cz\'edli~\cite[Corollary 3.4 ]{czglamps} that the congruence lattice of a slim semimodular lattice satisfies the Bipartite Maximal Elements Property. 

Resuming the proof of Theorem~\ref{thmnonax}, 
observe that if $\aJir {\lalg D}\cong \alg K_n$ for some $n\in J_1$, then ${\lalg D}$ fails to satisfy the Bipartite Maximal Elements Property. Therefore we obtain from \cite[Corollary 3.4 ]{czglamps} that  $\alg K_n\cong \alg J(\Con{\alg L})$ with a slim semimodular $\alg L$ cannot hold if $n\in J_1$. For later reference (at Remark~\ref{rem:nWhWrlNsW}), note at this point that in addition to proving Theorem~\ref{thmnonax}, our argument will automatically yield that 
\begin{equation}
\parbox{8.2cm}{there is no finite set $\Phi$ of first-order formulas in $\lang\ordsigma$ such that, for any finite distributive lattice ${\lalg D}$, $\Phi$ holds in $\aJir {\lalg D}$ if and only if ${\lalg D}$ satisfies the Bipartite Maximal Elements Property.}
\label{eqpbx:swGjwmknHpRc}
\end{equation}

We have just pointed out that $\phi$ implies the Bipartite Maximal Elements Property.
Now it follows that $\phi$ fails in $\alg K_n$ for $n\in J_1$, and Lo\'s's Theorem gives that $\phi$ does not hold in $\alg A_1$. 

Next, we assume that $n\in J_0$. Let $k:=n/2$. To construct a lattice, 
we begin with the direct square of the $(k+1)$-element chain; it is a distributive lattice called a \emph{grid}. For $n=8$, this grid consists of the pentagon-shaped elements in Figure~\ref{figL8}. Going downwards, we label the edges on the upper left boundary by $a_0$, $a_2$, \dots, $a_{n-2}$.
Also, we label the edges on the upper right boundary by $a_1$, $a_3$, \dots, $a_{n-1}$,  going downwards again. In this way, we have labeled the non-vertical thick edges of Figure~\ref{figL8}. 
At this stage, the circle-shaped elements and the edges having (at least one) circle-shaped endpoints are not present. 
The edges of the grid determine $k^2$ many 4-cells (that is, squares) in the plane. 
We obtain a slim semimodular lattice $\alg L_n$ from the grid in $n$ steps in the following way. 
First, we insert a fork (that is, a multifork of rank 1) into
$\REnl{a_0}\cap \LEnl{a_1}$; this intersection is the uppermost grey-filled rectangle (which happens to be a square) in the figure. This insertion brings the $b_0$-labeled thick vertical edge in. 
(Since there would not be enough room otherwise, the label of a vertical thick edge is always below the edge in Figure~\ref{figL8}; note that the thick edges are exactly the labeled edges.)
In the second step, we insert a fork  into
$\REnl{a_0}\cap \LEnl{a_{n-1}}$, understood in the lattice obtained in the previous step, of course. In the figure, this step brings the $b_7$-labeled thick vertical edge in, and the intersection in question as well as the subsequent intersections are grey-filled.  In the third step, we insert a fork into $\LEnl{a_1}\cap \REnl{a_2}$
and we obtain the $b_1$-labeled thick vertical edge. 
And so on, inserting a fork into $\REnl{a_2}\cap \LEnl{a_3}$, $\LEnl{a_3}\cap \REnl{a_4}$, $\REnl{a_4}\cap \LEnl{a_5}$, \dots, 
$\REnl{a_{n-2}}\cap \LEnl{a_{n-1}}$, one by one and in this order, we obtain the thick vertical edges with labels  $b_2$, $b_3$, $b_4$, \dots, $b_{n-2}$, respectively. 
After performing these steps, we obtain the required lattice  $\alg L_n$. For $n=8$, $\alg L_n= \alg L_8$  is given in Figure~\ref{figL8}. By Theorem 3.7 of Cz\'edli~\cite{czgtrajcolor},  $\alg L_n$ is a slim semimodular lattice. By (the Main) Lemma 2.11 of Cz\'edli~\cite{czglamps}, $\alg K_n\cong \alg J(\Con{\alg L_n})$. This isomorphism and the choice of $\phi$ gives that $\phi$ holds in $\alg K_n$. This is true for all $n\in J_0$, whereby  Lo\'s's Theorem implies that $\phi$ holds in $\alg A_0$.   But this is a contradiction 
since $\alg A_0\cong \alg A_1$ but we have previously seen that $\phi$ does not hold in $\alg A_1$. The proof of Theorem~\ref{thmnonax} is complete.
\end{proof}

\section{A new property and the limits of our method}\label{sect:morelatt}

In the section, we present a new property of the congruence lattices of slim semimodular lattices. Using this property, we point out that the construction given in the previous section is not appropriate to strengthen Theorem~\ref{thmnonax} from finite posets and $\aJir{\Con {\lalg L}}$ to finite distributive lattices and $\Con {\lalg L}$. The concepts and notations given in Section~\ref{sectionlatt} remain in effect but  we also need some additional definitions and notations.

\begin{definition}[Two-cover Property by Gr\"atzer~\cite{gr:confork}]\label{def:pbxwzwWrr} A finite distributive lattice ${\lalg D}$ satisfies the \emph{Two-cover Property} if every element of the poset $\aJir {\lalg D}$ has at most two covers in $\aJir {\lalg D}$ (with respect to $\prec_{\aJir {\lalg D}}$).
\end{definition}

Recall the following result from Gr\"atzer~\cite{gr:confork}.

\begin{lemma}[{Gr\"atzer~\cite[Theorem 1.5]{gr:confork}}]\label{lemma:twocoverP}
The congruence lattice $\Con {\lalg L}$ of a  slim semimodular lattice ${\lalg L}$ satisfies the Two-cover Property. 
\end{lemma}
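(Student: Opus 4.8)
The plan is to prove the property by induction on the number of forks used to build $\lalg{L}$ from a grid, relying on the structure theory of slim semimodular lattices (Cz\'edli and Schmidt~\cite{czgschtJH} and Cz\'edli~\cite{czgtrajcolor}): every slim semimodular lattice is obtained from a grid, i.e.\ a direct product of two finite chains, by inserting (rank-$1$) forks one at a time. I read ``$x$ has at most two covers'' in the sense ``$x$ is covered by at most two elements'' of the poset $\aJir{\Con{\lalg{L}}}$, which is the reading the argument below naturally supports; recall also that each join-irreducible congruence has the form $\operatorname{con}(p)$ for a prime interval $p$, and that, by Cz\'edli's trajectory coloring, $\operatorname{con}(p)=\operatorname{con}(q)$ exactly when $p$ and $q$ lie on a common trajectory.

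For the base case, let $\lalg{G}$ be a grid. Its join-irreducible congruences are exactly the congruences of its ``row'' and ``column'' trajectories, and these are pairwise incomparable; thus $\aJir{\Con{\lalg{G}}}$ is an antichain and every element has zero covers. For the inductive step, let $\lalg{L}'$ arise from $\lalg{L}$ by inserting a fork into a $4$-cell $C$, and assume the Two-cover Property in $\aJir{\Con{\lalg{L}}}$. The cell $C$ is crossed by exactly two trajectories $T_1,T_2$ (in the notation of the construction preceding this lemma, $C=\REnl{a_i}\cap\LEnl{a_j}$ is the crossing of the trajectories labelled $a_i$ and $a_j$). The structural claim on which everything rests is that the only effect of the insertion on the poset of join-irreducible congruences is to adjoin a single new element $\gamma$, namely the congruence of the new fork edge, which lies strictly below precisely $\operatorname{con}(T_1)$ and $\operatorname{con}(T_2)$, lies above none of the previously present join-irreducible congruences, and creates no new comparability between two old ones. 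Granting this, the induction closes immediately: $\gamma$ is covered by at most the two elements of $\set{\operatorname{con}(T_1),\operatorname{con}(T_2)}$, so it has at most two covers; and since $\gamma$ is below old elements only, adjoining it makes $\gamma$ an upper cover of no old element, while the order among the old elements is unchanged, so no old element gains a cover. Hence every element of $\aJir{\Con{\lalg{L}'}}$ has at most two covers.

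The main obstacle is exactly this structural claim, that is, controlling how $\aJir{\Con{}}$ changes under one fork insertion: that precisely one new join-irreducible congruence is born at the cell, that it is forced by---and only by---the two trajectory-congruences crossing $C$ (so it is below at most two elements and above none of the old ones), and that the insertion does not create any new forcing relation between two old join-irreducible congruences. This is the heart of Gr\"atzer's fork-extension analysis, and it is where semimodularity and planarity are genuinely used; I would verify it by tracking prime-perspectivities through the cell before and after inserting the fork. As a self-contained alternative avoiding the induction, one can appeal to Gr\"atzer's Swing Lemma: a join-irreducible congruence is $\operatorname{con}(p)$ for the top prime interval $p$ of its trajectory, and its covers correspond to the minimal one-step moves (prime-perspectivities and swings) joining it to larger join-irreducible congruences; since, in a planar diagram, the lower covers of the common top vertex around which a swing turns are linearly ordered from left to right, such a move can go in at most the two directions (one left swing and one right swing), again yielding at most two covers. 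Either way the bound $2$ reflects the planar left/right dichotomy and is tight, as the crowns $\alg K_n$ with their two-covered elements already show.
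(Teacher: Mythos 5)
The first thing to note is that the paper contains no proof of this statement at all: Lemma~\ref{lemma:twocoverP} is recalled verbatim, with the attribution Gr\"atzer~\cite[Theorem 1.5]{gr:confork}, and is used as a black box. So your proposal can only be measured against the cited source, and indeed your outline is essentially a reconstruction of the strategy of that paper (describe how the join-irreducible congruences change under one fork insertion, then induct along the Cz\'edli--Schmidt fork construction). The approach is therefore the ``right'' one, but as a blind proof it has two genuine holes.

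First, the structure theorem you start from is misquoted: a grid followed by successive fork insertions yields exactly the slim \emph{rectangular} lattices (Cz\'edli--Schmidt), not all slim semimodular lattices. For instance, chains, or the five-element lattice obtained from the grid $C_2\times C_3$ by deleting a corner, are slim semimodular but never arise in your induction, so for them your argument proves nothing; closing this gap needs a further nontrivial reduction, e.g.\ Gr\"atzer's result \cite{grVIII} that the congruence lattices of slim semimodular lattices coincide, up to isomorphism, with those of slim rectangular lattices (this cannot be waved through, since the Gr\"atzer--Knapp rectangular extension is not congruence-preserving). Second, the ``structural claim'' that carries your inductive step---exactly one new join-irreducible congruence $\gamma$ appears, it lies above no old one, no new comparabilities are created among old ones, and the covers of $\gamma$ are among the two trajectory congruences of the cell---is precisely the main content of \cite{gr:confork}; you correctly identify it as the heart of the matter and then defer it to ``tracking prime-perspectivities'', which is not a proof. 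What you have is thus a correct induction scheme conditional on the central theorems of the very paper from which this lemma is quoted. Note also that the claim as you state it is not literally true: $\gamma$ does not lie ``strictly below precisely $\operatorname{con}(T_1)$ and $\operatorname{con}(T_2)$'', since it also lies below every join-irreducible congruence above either of them; what is true, and what your cover count actually needs, is that every old join-irreducible congruence above $\gamma$ is above $\operatorname{con}(T_1)$ or above $\operatorname{con}(T_2)$, so that the covers of $\gamma$ are among these two. Finally, the Swing Lemma paragraph is too vague to serve as the self-contained alternative it is announced to be.
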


While the Two-cover Property for a finite distributive lattice ${\lalg D}$ can easily be described by a first-order formula both in $\aJir {\lalg D}$ and in ${\lalg D}$, \eqref{eqpbx:swGjwmknHpRc} indicates the situation for the Bipartite Maximal Elements Property is more involved.

For an element $y$ of a lattice ${\lalg L}=\tuple{L,\leq}$, the \emph{principal ideal} $\set{x\in L: x\leq y}$ will be denoted by $\ideal y$. For a finite distributive lattice ${\lalg D}$, the \emph{set of the maximal elements} of the poset $\aJir {\lalg D}$ will be denoted by $\maxJir {\lalg D}$. 
One of the key concepts in this section is given in the following definition. Motivated by Lemma~\ref{lemma:twocoverP}, we do not define this concept for ${\lalg D}$ \emph{not satisfying} the Two-cover Property.

\begin{definition}[Cyclic element]\label{def:cyclelem} 
Let ${\lalg D}$ be a finite distributive lattice satisfying the Two-cover Property; see Definition~\ref{def:pbxwzwWrr}. 
An element $x$ of  ${\lalg D}$ is  a \emph{cyclic element} if there is an integer $n\geq 3$ and an $n$-tuple $\tuple{a_0,a_1,\dots, a_{n-1}}$ of pairwise distinct elements of $\maxJir {\lalg D}$ such that 
\begin{align}
x&= a_0\vee a_1\vee\dots\vee a_{n-1},\label{align:swzwtGa}
\end{align}
and, for all $0\leq i<j\leq n-1$, 
\begin{equation}
\Jir {\lalg D}\cap \ideal{a_i}\cap\ideal{a_j}\text{ is nonempty if and only if }j=i+1 \text{ or } \tuple{i,j}=\tuple{0,n-1}.
\label{eq:cycLwzT}
\end{equation}
\end{definition}

\begin{figure}[ht]
\centerline
{\includegraphics[scale=1.1]{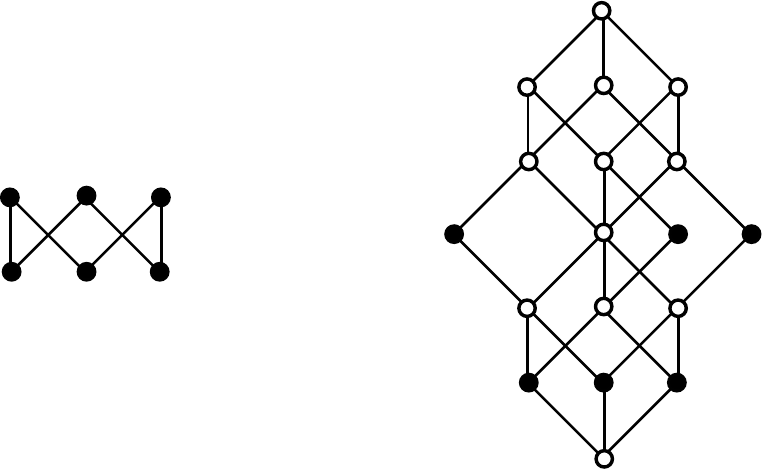}}
\caption{$\FDthree$ on the right and $\aJir{\FDthree}$  on the left}\label{figFD3}
\end{figure}
The free distributive lattice $\FDthree$ on three generators is shown on the right of Figure~\ref{figFD3} while $\aJir{\FDthree}$ is on the left of the figure.  (This figure and Proof~\ref{rem:skVslGjR} below are only included in this extended version of the paper.) Observe that $\aJir{\FDthree}$ satisfies the Two-cover property.

\begin{remark}\label{rem:skVslGjR}
The top element of the free distributive lattice $\FDthree$ on three generators is a cyclic element. 
Also, if ${\lalg D}$ is a distributive lattice with less than $|\FDthree|=18$ elements such that ${\lalg D}$ satisfies the Two-cover Property, see Definition~\ref{def:pbxwzwWrr}, then ${\lalg D}$ has no cyclic element.
\end{remark}

\begin{proof}[Proof of Remark \ref{rem:skVslGjR}] The first sentence of the remark needs no proof. 

 For a poset $\lalg P=\tuple{P,\leq}$, a subset (possibly empty subset) $X$ of $P$ is a \emph{down-set} if for every $u\in X$ and $v\in P$, $v\leq u$ implies that $v\in X$. With respect to set inclusion ``$\subseteq$'', the \emph{set $\Id {\lalg P}$ of all down-sets} of ${\lalg P}$ is a distributive lattice. By the  structure theorem of finite distributive lattices, see Gr\"atzer~\cite[Theorem II.1.9]{GrGLT} for example, every finite distributive lattice ${\lalg E}$ is (isomorphic to)  $\Id{\aJir{\lalg E}}$. The  $n$-crown $\alg K_n=\tuple{K_n,\leq}$ has been defined in the proof of Theorem~\ref{thmnonax}; see Figures~\ref{figK8} for $\alg K_8$ and the left of Figure~\ref{figFD3} for $\alg K_3$. 

Assume that ${\lalg D}$ is a distributive lattice having a cyclic element and satisfying the Two-cover Property; we need to show that ${\lalg D}$ has at least 18 elements.  Due to our assumption, there exists an $n$ such that $3\leq n\in\Nplu$ and  $\alg K_n$ is a subposet of $\aJir {\lalg D}$. For a down-set $X$ of $\aJir {\lalg D}$, 
$X\cap  K_n$ is a down-set of $\alg K_n$. So we have a map $\phi\colon \Id{\aJir {\lalg D}}\to \Id{\alg K_n}$, defined by $\phi(X):=X\cap  K_n$. This map is surjective since for each $Y\in\Id{\alg K_n}$, 
$X:=\{x\in \Jir {\lalg D}: x\leq y$ for some $y\in Y\}$ is a pre-image of $Y$, that is, $\phi(X)=Y$. Using the surjectivity of $\phi$ and the structure theorem mentioned above, we have that 
$|D|=|\Id{\aJir {\lalg D}}|\geq |\Id{\alg K_n}|$. 
Now if $n=3$, then Figure~\ref{figFD3} shows that $\Id{\alg K_n}=\Id{\alg K_3}=\FDthree$, which has 18 elements. If $n\geq 4$, then $\Id{\alg K_n}$ has even more elements since it has  $2^n\geq 16$ elements not containing any maximal element of $\alg K_n$ and at least $2^n-1\geq 15$ elements that contain at least one maximal element of $\alg K_n$.   
\end{proof}



\begin{definition}[V-set and W-set]\label{def:MsetMMset} 
Let ${\lalg D}$ be a finite distributive lattice and let $x\in {\lalg D}$. 
A \emph{V-set} of $x$ is a two-element subset $\set{a_0,a_1}$ of $\maxJir {\lalg D}\cap\ideal x$ such that 
$\Jir {\lalg D}\cap \ideal{a_0}\cap\ideal{a_1}$ is nonempty. Similarly, a \emph{W-set} of $x$ is a four-element subset
$Y$ of $\maxJir {\lalg D}\cap\ideal x$  that can be written in the form $Y=\set{a_0,a_1,a_2,a_3}$ such that
\begin{equation}
\text{for $0\leq i<j\leq 3$, 
$\Jir {\lalg D}\cap \ideal{a_i}\cap\ideal{a_j}\neq\emptyset$ if and only if $j = i+1$.}
\label{eq:MndMMsTrhG}
\end{equation}
\end{definition}

The terminology ``V-set'' is explained by the diagram of the three-element poset consisting of $a_0$, $a_1$, and a lower bound of $a_0$ and $a_1$. (According to this visualization,  a ``triple V'' character would be better than ``W''  in the name ``W-set'', but there is no such character.)

\begin{definition}[VW-element]\label{def:MMMelmnt} 
An element $x$ of a  finite distributive lattice ${\lalg D}$ is
a \emph{VW-element} if 
\begin{equation}
0\neq x=\bigvee \Bigl(\maxJir {\lalg D}\cap \ideal x\Bigl)
\label{eq:WrjRslGzT}
\end{equation}
and, in addition, for each $y\in \maxJir {\lalg D}\cap \ideal x$, exactly one of the following two conditions holds:
\begin{align}
&\text{either $y$ belongs to a unique V-set of $x$,}
\label{align:slTmrBma}
\\
&\text{or $y$ belongs to a unique W-set of $x$.}
\label{align:slTmrBmb}
\end{align}
\end{definition}

Armed with Definitions~\ref{def:MsetMMset}--\ref{def:MMMelmnt}, we define the following property.

\begin{definition}[Decomposable Cyclic Elements Property]\label{def:decCycEls} 
A finite distributive lattice ${\lalg D}$ satisfies the \emph{Decomposable Cyclic Elements Property} if
\begin{equation} 
\parbox{6.6cm}{for each cyclic element $x\in {\lalg D}$, there exist VW-elements $y,z\in {\lalg D}$ such that $x=y\vee z$ and $\maxJir {\lalg D}\cap\ideal y \cap \ideal z=\emptyset$.}
\label{eqpbx:nWpr}
\end{equation}
\end{definition}

Based on Remark~\ref{rem:skVslGjR}, it is easy to see the following.

\begin{remark}
The free distributive lattice $\FDthree$ fails to satisfy the Decomposable Cyclic Elements Property. 
For every distributive lattice ${\lalg D}$ with less than 18 elements, if ${\lalg D}$ satisfies the Two-cover Property (see Definition~\ref{def:pbxwzwWrr}), then ${\lalg D}$ satisfies the Decomposable Cyclic Elements Property. 
\end{remark}

In harmony with \eqref{eqsignature}, we assume that lattices are of type $\ordsigma=\bigl\langle 1,0, \tuple{\leq,2}\bigr\rangle$. However, since the equalities $x\vee y=z$ and $x\wedge y=t$ are easily expressible in the first-order language $\lang\ordsigma$, we can use the binary operation symbols $\vee$ and $\wedge$ without loss of generality. By a similar reason, we can also use the binary relation symbols $<$ and $\neq$.
Below, we formulate the main result of this section; for the concepts occurring in it, see Definitions~\ref{def:bzwBNrtmG}, \ref{def:pbxwzwWrr}, and \ref{def:decCycEls}.

\begin{theorem}\label{thm:newproptY} \ 

\textup{(A)} If ${\lalg L}$ is a slim semimodular lattice, then its congruence lattice satisfies the Decomposable Cyclic Elements Property.  

\textup{(B)} Let ${\lalg D}$ be a finite distributive lattice satisfying the Two-cover Property. Then ${\lalg D}$ satisfies the Decomposable Cyclic Elements Property if and only if it satisfies the Bipartite Maximal Elements property. 

\textup{(C)} The Decomposable Cyclic Elements Property is equivalent to a first-order formula of $\lang\ordsigma$ in \emph{finite} distributive lattices satisfying the Two-cover Property.
\end{theorem}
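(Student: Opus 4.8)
The plan is to obtain (A) as a consequence of (B), and to reduce both (B) and (C) to a graph-theoretic analysis of the maximal join-irreducibles. To this end I attach to a finite distributive lattice $\lalg D$ its \emph{conflict graph} $G(\lalg D)$: the vertex set is $\maxJir{\lalg D}$, and two distinct maximal elements $a,a'$ are joined by an edge exactly when $\Jir{\lalg D}\cap\ideal a\cap\ideal{a'}\neq\emptyset$, i.e.\ when $\set{a,a'}$ is a V-set (Definition~\ref{def:MsetMMset}). In this language a W-set is a four-vertex path of $G(\lalg D)$, a cyclic element $x$ (Definition~\ref{def:cyclelem}) is a join $x=a_0\vee\dots\vee a_{n-1}$ whose summands induce a chordless $n$-cycle of $G(\lalg D)$, and a VW-element is a join of maximal elements whose induced subgraph is a disjoint union of single edges and four-vertex paths. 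Because join-irreducibles are join-prime in a distributive lattice, $\maxJir{\lalg D}\cap\ideal{\bigvee S}=S$ for every antichain $S$ of maximal elements; hence a decomposition $x=y\vee z$ with $\maxJir{\lalg D}\cap\ideal y\cap\ideal z=\emptyset$ is nothing but a partition of the vertex set of that cycle into two parts, each of which must induce a disjoint union of $2$- and $4$-vertex paths.

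For (B) I would prove a parity dichotomy for a single cyclic element, i.e.\ for a cycle $C_n$ of $G(\lalg D)$. If $n$ is even, cut $C_n$ into consecutive arcs of length $2$ and $4$ so that the number of arcs is even — take $n/2$ edges when $4\mid n$, and one four-arc together with $(n-4)/2$ edges when $n\equiv2\pmod4$, in which case the arc count $(n-2)/2$ is again even — and then two-color the arcs alternately around the cycle; the even arc count makes the coloring consistent and forces arcs of the same color to be non-adjacent, so the two color classes give VW-elements $y,z$ decomposing $x$. If $n$ is odd there is no such tiling at all, since arcs of length $2$ and $4$ have even total length, so an odd cyclic element is not decomposable. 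Now DCE says every cyclic element decomposes, and by the dichotomy this holds iff $G(\lalg D)$ has no odd chordless cycle; since a shortest odd cycle is always chordless (a chord would yield a shorter odd cycle) and hence underlies a cyclic element, this is the same as $G(\lalg D)$ being bipartite, which is the Bipartite Maximal Elements Property. Part (A) then follows at once: by Lemma~\ref{lemma:twocoverP} the congruence lattice of a slim semimodular lattice has the Two-cover Property, and by Cz\'edli~\cite[Corollary~3.4]{czglamps} it satisfies the Bipartite Maximal Elements Property, so (B) gives it the Decomposable Cyclic Elements Property.

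For (C) the difficulty is that the clause ``for each cyclic element $x$'' of Definition~\ref{def:decCycEls} ranges over configurations of unbounded size, so DCE is not visibly first-order; indeed by \eqref{eqpbx:swGjwmknHpRc} bipartiteness of $G(\lalg D)$ is not first-order on the posets $\aJir{\lalg D}$. The resolution is that in the richer lattice language $\lang\ordsigma$ (with $\vee,\wedge$) one never has to locate a single cycle. First, being a V-set, a W-set, a VW-element, and the relation ``$x=y\vee z$ and $\maxJir{\lalg D}\cap\ideal y\cap\ideal z=\emptyset$'' are each expressible by a bounded first-order formula, as they only mention two or four maximal elements at a time. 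Second, I would replace the universal quantifier over cyclic elements by the universal quantifier over all $x$ with $x=\bigvee(\maxJir{\lalg D}\cap\ideal x)$ each of whose maximal elements $a\le x$ has \emph{exactly two} conflict-neighbors below $x$; this is a first-order, purely local condition singling out the joins of disjoint unions of cycles of $G(\lalg D)$. The equivalence of the two statements is the crux of (C): a join of several cycles decomposes into two VW-elements iff each cyclic summand does (decompose the cycles separately and take the componentwise unions), so the relaxed sentence holds iff every cyclic element decomposes, i.e.\ iff DCE holds. This displays DCE as one first-order sentence on finite distributive lattices with the Two-cover Property.

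The step I expect to be the main obstacle is a point swept under the rug above: the Bipartite Maximal Elements Property is phrased with the \emph{cover} relation $\prec_{\aJir{\lalg D}}$, whereas $G(\lalg D)$, V-sets and cyclic elements use the coarser relation of having a common join-irreducible lower bound. For arbitrary finite posets these two relations differ, so the identification of the Bipartite Maximal Elements Property with bipartiteness of $G(\lalg D)$ — and thus the whole of (B) — must genuinely exploit the Two-cover Property, and perhaps the structural description of $\aJir{\Con{\lalg L}}$ in \cite{czglamps}, to show that along the cyclic configurations the two relations coincide, so that an odd cycle for one is an odd cyclic element for the other. Once this is settled, the tiling argument in the even case and the localization in (C) are comparatively routine.
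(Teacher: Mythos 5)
Your route is the paper's own, not an alternative one: your conflict graph $G(\lalg D)$ is the paper's relation $\relE$ on $\maxJir{\lalg D}$; your tiling-and-alternate-colouring proof of (B), with the case split $4\mid n$ versus $n\equiv 2\pmod 4$ and the parity obstruction for odd $n$ (plus the observation that a shortest odd cycle is chordless), is exactly the paper's argument; (A) is deduced the same way from Lemma~\ref{lemma:twocoverP}, \cite[Corollary 3.4]{czglamps} and (B); and your ``joins of vertex sets in which every maximal join-irreducible below $x$ has exactly two neighbours below $x$'' are precisely the paper's multicyclic elements. So the comparison comes down to the two steps you did not carry out, and there the news is mixed.

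First, the step you dispose of in a parenthesis --- ``decompose the cycles separately and take the componentwise unions'' --- is where the paper spends most of its proof of (C), and it is a genuine gap in your write-up. Given decompositions $x_i=y_i\vee z_i$ of the cyclic summands, one must verify that $y:=y_1\vee\dots\vee y_k$ is again a VW-element and that $\maxJir{\lalg D}\cap\ideal y\cap\ideal z=\emptyset$. Neither is automatic: a V-set or W-set of $y$ could a priori contain maximal elements coming from different circles, and the uniqueness clauses of Definition~\ref{def:MMMelmnt} must be re-established for $y$. The paper's verification runs on join-primeness \eqref{eqpbx:wsFjERnQvCz}, on \eqref{pbx:sZrkjNhZzafBrnsn} (your antichain observation), and, crucially, on the multicircle being \emph{spanned}, so that vertices in distinct components are never $\relE$-related; this confines every V- or W-set of $y$ to a single component and then to a single $y_i$, after which uniqueness transfers from $y_i$ to $y$. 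The idea you name is the right one and does go through, but as written your (C) lacks its central verification. (Minor: your localized quantifier must also require $\maxJir{\lalg D}\cap\ideal x\neq\emptyset$, as the paper's formula \eqref{alignBBvrbszNsjGa} does; otherwise $x=0=\bigvee\emptyset$ counts as multicyclic and your sentence is false in every finite lattice.)

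Second, your closing ``main obstacle'' paragraph misjudges how the cover-versus-bound tension is resolved, and the cure you propose would fail. The Two-cover Property does \emph{not} make ``common lower cover'' and ``common join-irreducible lower bound'' agree, not even along cyclic configurations: take maximal elements $a_0,a_1,a_2$, for each pair a join-irreducible $d_{ij}$ below both $a_i$ and $a_j$, and interpolate between $d_{ij}$ and $a_i$ (and, separately, between $d_{ij}$ and $a_j$) a join-irreducible lying below that one maximal element only. Every element of this poset has at most two covers, and the three maximal elements pairwise share join-irreducible lower bounds but share no lower covers; hence, with the literal cover reading, the Bipartite Maximal Elements Property holds, while $a_0\vee a_1\vee a_2$ is a cyclic element that cannot decompose, by your own parity argument. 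So under the cover reading no proof of (B) can exist, because (B) is then false; nor can the structure theory of $\aJir{\Con{\lalg L}}$ from \cite{czglamps} help, since (B) concerns arbitrary finite distributive lattices with the Two-cover Property. The paper's proof works because its step \eqref{eqpbx:LssVtdJGrhtzRj} reads the Bipartite Maximal Elements Property as bipartiteness of the common-lower-bound graph $\tuple{\maxJir{\lalg D},\relE}$, i.e., exactly your $G(\lalg D)$. The correct repair of your proof is therefore not to ``genuinely exploit the Two-cover Property'' but simply to adopt that lower-bound reading of Definition~\ref{def:bzwBNrtmG}; under it, your argument for (B) is already complete.
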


\begin{proof} 
We will use the following well-known property of distributive lattices ${\lalg D}$: 
\begin{equation}
\parbox{8.1cm}{if $x\in \Jir {\lalg D}$, $y_1,\dots,y_t\in D$, and $x\leq y_1\vee \dots \vee y_t$, then there is an $i\in\set{1,\dots,t}$ such that $x\leq y_i$.}
\label{eqpbx:wsFjERnQvCz}
\end{equation}
Indeed, the premise of \eqref{eqpbx:wsFjERnQvCz} and distributivity yield that $x=(x\wedge y_1)\vee\dots\vee (x\wedge y_t)$, whereby $x\in\Jir {\lalg D}$ implies easily that $x\leq y_i$ for some $i$. Using \eqref{eqpbx:wsFjERnQvCz} and the fact that $\maxJir {\lalg D}$ is an antichain, we conclude that, for arbitrary $t\in \Nplu$ and $a,b_1,\dots b_t$,
if $a,b_1,\dots,b_t\in\maxJir {\lalg D}$ and $a\leq b_1\vee\dots\vee b_t$, there is an $i\in\set{1,\dots,t}$ such that $a=b_i$. Hence, for every $Y$, 
\begin{equation}
\text{if $Y\subseteq \maxJir {\lalg D}$ and $y:=\bigvee Y$, then
$Y=\maxJir {\lalg D}\cap\ideal y$.
}
\label{pbx:sZrkjNhZzafBrnsn}
\end{equation}
For $x,y\in \maxJir {\lalg D}$, we let
\begin{equation}
x\relE y \defiff \Bigl( x\neq y\text{ and }\Jir {\lalg D}\cap
\ideal x\cap \ideal y\neq \emptyset \Bigr).
\label{eq:relEowg}
\end{equation} 
Based on \eqref{eq:relEowg} and Definition~\ref{def:grSnTr},
$\tuple{\maxJir {\lalg D}, E }$ is a graph. Note that by a \emph{graph} we mean an undirected graph without loop edges and multiple edges. Clearly,
\begin{equation}
\parbox{8cm}{${\lalg D}$ satisfies the Bipartite Maximal Elements Property if and only if $\tuple{\maxJir {\lalg D}, E }$ is a bipartite graph.}
\label{eqpbx:LssVtdJGrhtzRj}
\end{equation}
For graphs $\tuple{G_1,\relE_1}$ and $\tuple{G_2,\relE_2}$, we say that  $\tuple{G_1,\relE_1}$ is a \emph{spanned subgraph} of 
$\tuple{G_2,\relE_2}$ if $G_1\subseteq G_2$ and $\relE_1$ is $\relE_2\cap (G_1\times G_1)$, that is, $\relE_1$ is the restriction of $\relE_2$ to $G_1$. By a \emph{spanned circle} in a graph we mean a spanned subgraph that is a circle of length at least 3.

First, we deal with part (B). For the sake of contradiction, suppose that ${\lalg D}$  satisfies the Decomposable Cyclic Elements Property \eqref{eqpbx:nWpr} but fails to satisfy the
Bipartite Maximal Elements Property. Then the graph $\tuple{\maxJir {\lalg D}, E }$ is not bipartite by \eqref{eqpbx:LssVtdJGrhtzRj}. Therefore, as it is well known, this graph contains a circle of an odd length. 
By finiteness, we can pick a circle $\tuple{a_0,a_1,\dots a_{n-1}}$ of a \emph{minimal odd length} $n$.  By \eqref{eq:relEowg}, the graph contains no loop edge, whence $n\geq 3$. The notation $\tuple{a_0,a_1,\dots a_{n-1}}$ is understood so that   
$a_{0}\relE a_{1}$, $a_{1}\relE a_{2}$, \dots, $a_{n-2}\relE a_{n-1}$, and $a_{n-1}\relE a_{0}$. 
We claim that 
\begin{equation}
\parbox{8.3cm}{$\tuple{a_0,a_1,\dots a_{n-1}}$ is a \emph{spanned} circle in $\tuple{\maxJir {\lalg D}, E }$, that is, for $i,j\in\set{0,1,\dots,n-1}$, $a_i\relE a_j$ if and only if  $|j-i|\in\set{1, n-1}$.}
\label{eqtxtazTg}
\end{equation}
The ``if'' part is clear since  $\tuple{a_0,a_1,\dots a_{n-1}}$ is a circle. For the sake of contradiction, suppose that $a_i\relE a_j$ such that $|a_i-a_j|\notin \set{1,n-1}$. Since $i\neq j$ and $\relE$ is symmetric, we can assume that $j=i+r$ for some $r\in\set{2,3,\dots,n-2}$. Then 
$\tuple{a_i,a_{i+1},\dots, a_j}$ is a circle of length $r+1$ while $\tuple{a_j,\dots, a_{n-1},a_0,\dots, a_i}$ is a circle of length $n-r+1$. Both circles have lengths smaller than $n$. Since $(r+1)+(n-r+1)=n+2$ is odd, one of these two circles is of an odd length. This contradicts the minimality of $n$ and proves \eqref{eqtxtazTg}.

Next, using the spanned circle  in \eqref{eqtxtazTg}, we define $x$ by $x:=a_0\vee a_1\vee\dots \vee a_{n-1}$. Clearly, \eqref{align:swzwtGa} holds. Since \eqref{eqtxtazTg} implies \eqref{eq:cycLwzT}, $x$ is a cyclic element of ${\lalg D}$. Since we have assumed that ${\lalg D}$ satisfies the Decomposable Cyclic Elements Property, we can pick VW-elements $y$ and $z$ according to \eqref{eqpbx:nWpr}. Let $I_y:=\set{i: 0\leq i\leq n-1\text{ and }a_i\leq y}$ and $I_z:=\set{i: 0\leq i\leq n-1\text{ and }a_i\leq z}$. By $x=y\vee z$ and \eqref{pbx:sZrkjNhZzafBrnsn}, we have that  $\set{a_0,\dots, a_{n-1}}=I_y\cup I_z$. Since $\ideal y\cap \ideal z$ is disjoint from $\maxJir {\lalg D}$ by \eqref{eqpbx:nWpr}, $I_y\cap I_z=\emptyset$. Thus, $n=|I_y|+|I_z|$. By the definition of a VW-element, $\set{a_i:i\in I_y}$ is a disjoint union of V-sets and W-sets of $y$. (We allow the case when only V-sets occur as well as the case when only W-sets occur.) A V-set and a W-set consist of two and four elements, respectively, whereby $|\set{a_i:i\in I_y}|=|I_y|$ is an even number. We obtain similarly that $|I_z|$ is even. 
Hence, $n=|I_y|+|I_z|$ is an even number. This contradicts the choice of $n$ and proves the ``only if'' implication of part (B).

Second, assume that ${\lalg D}$  satisfies the Bipartite Maximal Elements Property. Let $x\in D$ be a cyclic element. By 
Definition~\ref{def:cyclelem}, there is an $n\geq 3$ and an 
$n$-tuple $\tuple{a_0,a_1,\dots, a_{n-1}}$ of pairwise distinct elements of $\maxJir {\lalg D}$ that  satisfies \eqref{align:swzwtGa} and \eqref{eq:cycLwzT}. Since we obtain from   \eqref{eq:cycLwzT} that $\tuple{a_0,a_1,\dots, a_{n-1}}$ is a spanned circle in the graph $\tuple{\maxJir {\lalg D},\relE}$ and this graph is bipartite, we obtain that $n$ is an even number. There are two cases depending on whether $n$ is divisible by 4 or not.
If $n$ is divisible by 4, then define
\begin{equation}
\begin{aligned}
U&=\set{a_0,a_1,\,\, a_4,a_5,\,\, a_8,a_9,\,\,\dots,\,\, a_{n-4}, a_{n-3} },\cr
V&=\set{a_2,a_3,\,\, a_6,a_7,\,\, a_{10},a_{11},\,\,\dots,\,\, a_{n-2}, a_{n-1}},
\end{aligned}
\label{eq:sjfprsngYl}
\end{equation}
and let $y:=\bigvee U$ and $z:=\bigvee V$. 
By \eqref{pbx:sZrkjNhZzafBrnsn}, $U=\maxJir {\lalg D}\cap\ideal y$. 
It is clear by \eqref{eq:cycLwzT} that  $\set{a_0,a_1}$, $\set{a_4,a_5}$, $\set{a_8,a_9}$,  \dots, $ \set{a_{n-4}, a_{n-3}}$ is the complete list of V-sets of $y$ and $y$ has no W-set.  Hence, we conclude that $y$ a VW-element. We obtain similarly that so is $z$. 
Using $U=\maxJir {\lalg D}\cap\ideal y$, $V=\maxJir {\lalg D}\cap\ideal z$ and $U\cap V=\emptyset$, we obtain that $\maxJir {\lalg D}\cap\ideal y\cap \ideal z=\emptyset$. Since $U\cup V=\set{a_0,a_1,\dots, a_{n-1}}$, \eqref{align:swzwtGa} gives that $y\vee z=x$. Thus, the requirements of \eqref{eqpbx:nWpr} hold for $x$ provided $n$ is divisible by 4.

Next, assume that $n$ is not divisible by 4 and keep it in mind that $n$ is even. Since $n\geq 3$, we have that $n\geq 6$.  Let
\begin{equation}
\begin{aligned}
U&=\set{a_0,a_1,
a_2,a_3,\,\, a_6,a_7,\,\, a_{10},a_{11},\,\,\dots,\,\, a_{n-4}, a_{n-3}},\cr
V&=\set{a_4,a_5,\,\, a_8,a_9,\,\, a_{12},a_{13},\,\,\dots,\,\, a_{n-2}, a_{n-1}},
\end{aligned}
\label{eq:sjfprsngYl}
\end{equation}
Define $y:=\bigvee U$ and $z:=\bigvee V$ as before. 
It follows from \eqref{eq:cycLwzT} that $y$ has exactly one W-set, $\set{a_0,a_1,a_2,a_3}$ and the V-sets of $y$ are 
$\set{a_6,a_7}$, $\set{a_{10},a_{11}}$, \dots,  $\set{a_{n-4}, a_{n-3}}$. Hence, $y$ is a VW-element. Similarly to the case $4\mid n$, we have that $z$ is also a VW-element.
The equalities $\maxJir {\lalg D}\cap\ideal y\cap \ideal z=\emptyset$ and $y\vee z=x$ follow in the same way as in the $4\mid n$ case. 
Thus, the requirements of \eqref{eqpbx:nWpr} hold again, proving  the ``if'' direction  of part (B).
We have proved part (B) of the Theorem~\ref{thm:newproptY}.

By Corollary 3.4 in \cite{czglamps}, the congruence lattice $\Con {\lalg L}$ of a slim semimodular lattice ${\lalg L}$ satisfies the Bipartite Maximal Elements Property. This fact and part (B) imply part (A) of  the Theorem~\ref{thm:newproptY}.

Next, we turn our attention to part (C). 
Since we do not know how to distinguish between a large spanned circle and the union of several large spanned circles in the language $\lang\ordsigma$, we cannot base the axiomatization of the Cyclic Decomposable Elements Property merely on the concept of spanned circles. We define a new concept as follows. 
By  a \emph{spanned multicircle} of a graph $\tuple{G_2,\relE_2}$ we mean a spanned subgraph whose (connectivity) components are spanned circles in $\tuple{G_2,\relE_2}$. For example, if $\set{a_0,a_1,a_2, b_0,b_1, b_2, b_3}$ is a seven element subset of $G_2$ and, apart from symmetry, $a_0 \relE_2 a_1$, $a_1\relE_2 a_2$, $a_2\relE_2 a_0$ and  $b_0 \relE_2 b_1$, $b_1\relE_2 b_2$, $b_2\relE_2 b_3$, $b_3 \relE_2 b_0$ are the only edges among these seven elements, then these seven elements form a spanned multicircle in $\tuple{G_2,\relE_2}$. An element $x$ of a finite distributive lattice ${\lalg D}$ will be called a \emph{multicyclic element} if  there is a multicircle $H$ in the graph $\tuple{\maxJir {\lalg D}\cap \ideal x,\relE}$ such that $x=\bigvee H$.  Clearly, every cyclic element is multicyclic but not conversely. 
Note that $\tuple{\maxJir {\lalg D}\cap \ideal x,\relE}$ is a spanned subgraph of $\tuple{\maxJir {\lalg D},\relE}$. Hence, a spanned multicircle in  $\tuple{\maxJir {\lalg D}\cap\ideal x,\relE}$ (like $H$ above in the definition of a multicyclic element) is also a spanned multicircle in $\tuple{\maxJir {\lalg D},\relE}$.

Next, we define the following relational symbols; we are going to use them for a finite distributive lattice ${\lalg D}$; note that we do not make a notational distinction between these symbols and the relations they define on ${\lalg D}$.
\begin{align}
&\text{$\rjir y x$ means that $y\in \Jir {\lalg D}$ and $y\leq x$.} 
\label{align:rSmbLa}
\\
&\text{$\rmjir y x$ means that $y\in \maxJir {\lalg D}$ and $y\leq x$.}
\label{align:rSmbLb}
\\
&\text{$\rmset {y_0}{y_1}x$ means that $\set{y_0,y_1}$ is a V-set of $x$.}
\label{align:rSmbLc}
\\
&\text{$\rmmset {y_0}{y_1}{y_2}{y_3}x$ means that $\set{y_0,y_1,y_2,y_3}$ is a W-set of $x$.}
\label{align:rSmbLd}
\\
&\text{$\rmmm y$ means that $y$ is a VW-element.}
\label{align:rSmbLe}
\end{align}
On ${\lalg D}$, as it is easy to see, each of these five relations can be described by a first-order formula belonging to
$\lang\ordsigma$. Let $\orjir y x$, \dots,  $\ormmm y$ be first-order formulas in $\lang\ordsigma$ describing the relations defined in \eqref{align:rSmbLa}, \dots, \eqref{align:rSmbLe}, respectively. Let
\begin{equation}
\text{$\rmcyclic x$ mean that $x$ is a multicyclic element.}
\label{align:rSmbLf}
\end{equation}
We are going to show that $\rmcyclic x$ is equivalent to a first-order formula $\ormcyclic x$ belonging to $\lang\ordsigma$. 
To do so,  first we define some smaller formulas; each of them will be followed by an explanation in the text.
\begin{equation*}
\parbox{9cm}{
$\rho^\ast_{\textup{edge}}(y_1,y_2,x):=  \ormjir {y_1} x \mathand
\ormjir {y_2} x  \mathand y_1\neq y_2 
\\
\phantom{mmmmmmmmm}\mathand (\exists z)\bigl(z< y_1\mathand z<y_2\mathand \orjir z x
\bigr);$
} 
\end{equation*}
here and later ``$\mathand$'' means conjunction. The meaning of $\rho^\ast_{\textup{edge}}(y_1,y_2,x)$ is that $y_1$ and $y_2$ are the endpoints of an edge in the graph $\tuple{\maxJir {\lalg D} \cap \ideal x, \relE}$ and, in particular, they are distinct elements of the principal ideal $\ideal x$. Let
\begin{equation*}
\parbox{9cm}{
$\rho^\ast_{\exists2}(x):= (\forall y)\Bigl(\ormjir y x\then 
(\exists y_1)(\exists y_2)\bigl( \rho^\ast_{\textup{edge}}(y,y_1,x) \\
\phantom{mmmmmm}\mathand \rho^\ast_{\textup{edge}}(y,y_2,x)\mathand y_1\neq y_2\bigr)\Bigr);
$
} 
\end{equation*}
this formula means that each vertex $y$ of the graph $\tuple{\maxJir {\lalg D} \cap \ideal x, \relE}$ is the endpoint of at least two edges. Clearly, in spite of its textual description,  $\rho^\ast_{\leq2}(x)$ below is (equivalent to) a first-order formula in  $\lang\ordsigma$:
\begin{equation*}
\parbox{8cm}{
$\rho^\ast_{\leq2}(x):= (\forall y)\bigl(\ormjir y x\then$ 
there are at most 
two 
\\
\phantom{mmmmmmm}elements $y'$ such that $ \rho^\ast_{\textup{edge}}(y,y',x)\bigr).
$
} 
\end{equation*}
The formula 
\begin{equation*}
\rho^\ast_{=2}(x):= \rho^\ast_{\exists2}(x)\mathand \rho^\ast_{\leq2}(x)
\end{equation*}
means that each vertex $y$ of the graph $\tuple{\maxJir {\lalg D} \cap \ideal x, \relE}$ is the endpoint of exactly two edges. 
Next, we claim that 
\begin{align}
\ormcyclic  x:= {}
&(\exists y)(\rmjir y x)\,\, \mathand \,\,   \rho^\ast_{=2}(x) \,\,\mathand
\label{alignBBvrbszNsjGa}
\\
&(\forall y)\Bigl(\rjir x y \then (\exists z)\bigl(\rmjir z x \mathand y\leq z \bigr) \Bigr)
\label{alignBBvrbszNsjGb}
\end{align}
which is a first-order formula in  $\lang\ordsigma$, is equivalent to $\rmcyclic x$ defined in \eqref{align:rSmbLf}. To show this, let $H:=\set{y: \ormjir y x}$. By the  \eqref{alignBBvrbszNsjGa} part of $\ormcyclic  x$,  $H\neq \emptyset$. If each vertex in a \emph{finite} graph is connected with exactly two other vertices by edges, then this graph is the disjoint union of circles of length at least three. Hence,  \eqref{alignBBvrbszNsjGa} implies that $H$ is a spanned multicircle of $\tuple{\maxJir {\lalg D}\cap\ideal x, \relE}$. Since every element in a finite lattice is the join of join-irreducible elements, part
 \eqref{alignBBvrbszNsjGb} gives that $x=\bigvee H$, and we conclude that $x$ is a multicyclic element. That is, $\ormcyclic  x$ implies that $\rmcyclic  x$, while the converse implication is trivial. Thus, $\ormcyclic  x$ is equivalent to $\rmcyclic  x$, as required.

Finally, consider the following formula, which is (clearly equivalent to) a first-order formula in $\lang\ordsigma$.
\begin{equation}
\parbox{8cm}{
$(\forall x)\Bigl( \ormcyclic x\then (\exists y)(\exists z)\Bigl(\ormmm y \mathand \ormmm z \mathand 
\\
\phantom{mmmm}
x= y\vee z \mathand \neg(\exists t)\bigl(   \ormjir t y \mathand \ormjir t z\bigr)\Bigr)\Bigr)
$,
}
\label{eq:mcrDsPWbpsz}
\end{equation}
where ``$\neg$'' stands for negation. The meaning of \eqref{eq:mcrDsPWbpsz} is that each multicyclic element is the join of two VW-elements such that the 
intersection of the principal ideals determined by the two joinands is disjoint from $\maxJir {\lalg D}$. 
Our aim is to show that, 
\begin{equation}
\parbox{8cm}{for a finite distributive lattice ${\lalg D}$ satisfying the Two-cover Property,   \eqref{eqpbx:nWpr} is equivalent to  \eqref{eq:mcrDsPWbpsz}.}
\label{eqpbx:slDrzzHlKhRpkbR} 
\end{equation}
Since cyclic elements are multicyclic, it is clear that  \eqref{eq:mcrDsPWbpsz}  implies \eqref{eqpbx:nWpr}.

To show the converse implication, assume that ${\lalg D}$ satisfies  \eqref{eqpbx:nWpr} and the Two-cover Property, and let $x$ be a multicyclic element of ${\lalg D}$. Then there is a spanned multicircle $H$ in the graph $\tuple{\maxJir {\lalg D}\cap\ideal x,\relE}$ such that $x=\bigvee H$. Let $H_1$, \dots, $H_k$ be the connected components (that is, the circles) of $H$, and let $x_1:=\bigvee H_1$, \dots, $x_k:=\bigvee H_k$. 
We have that, for $i\in\set{1,\dots,k}$,
\begin{equation}
H=\maxJir {\lalg D}\cap\ideal x\text{ and  } H_i=\maxJir {\lalg D}\cap \ideal x_i
\label{eq:pRmrHpGnB}
\end{equation}
since the ``$\subseteq$'' inclusions are trivial while the converse inclusions follow from \eqref{pbx:sZrkjNhZzafBrnsn}.
Since $H_i$ is clearly a spanned circle not only in $\tuple{\maxJir {\lalg D}\cap\ideal x,\relE}$ but also in $\tuple{\maxJir {\lalg D}\cap\ideal x_i,\relE}$,  $x_i$ is a cyclic element for $i\in\set{1,\dots, k}$. Since we have assumed \eqref{eqpbx:nWpr}, each $x_i$ is of the form $x_i=y_i\vee z_i$ with VW-elements $y_i,z_i\in D$ such that $\maxJir {\lalg D}\cap \ideal{y_i} \cap \ideal  {z_i}=\emptyset$. Let $Y_i:=H_i\cap\ideal y_i$ and $Z_i:=H_i\cap\ideal z_i$. Define $Y:=Y_1\cup\dots\cup Y_k$, $y:=y_1\vee\dots\vee y_k$,  $Z:=Z_1\cup\dots\cup Z_k$, and $z:=z_1\vee\dots\vee z_k$.
We claim that 
\begin{align}
&\text{for $i\in\set{1,\dots,k}$, \ $H_i$ is the disjoint union of $Y_i$ and $Z_i$,}
\label{align:wztGTgh}\\
&y_i=\bigvee Y_i,\text{ and }z_i=\bigvee Z_i,\quad\text{whence}\quad y:=\bigvee Y\text{ and }z=\bigvee Z.
\label{eq:sRjrzGncDmmr}
\end{align}
The equality $\maxJir {\lalg D}\cap \ideal{y_i}\cap\ideal{z_i}=\emptyset$ and $H_i\subseteq \maxJir {\lalg D}$ yield that $Y_i\cap Z_i=\emptyset$. 
If $p\in H_i$, then $p\leq \bigvee H_i=x_i=y_i\vee z_i$, and  \eqref{eqpbx:wsFjERnQvCz} gives that $p\in\ideal y_i$ or $p\in\ideal z_i$, and so $p\in Y_i\cup Z_i$. Hence $H_i\subseteq Y_i\cup Z_i$ and we conclude the validity of \eqref{align:wztGTgh}.
By the definition of $Y_i$, we have that $y_i\geq \bigvee Y_i$. Let $Y_i':=\maxJir {\lalg D}\cap\ideal y_i$ and observe that $Y_i'\supseteq Y_i$.  If $p\in Y_i'$, then $p\in\ideal x_i$ and \eqref{eq:pRmrHpGnB} give that $p\in H_i$, whence $p\in Y_i$. Hence $Y_i'=Y_i$. Since $y_i$ is a VW-element, \eqref{eq:WrjRslGzT} yields that $y_i=\bigvee Y_i'$.  
Combining this equality with $Y_i'=Y_i$, we conclude the first equality in  \eqref{eq:sRjrzGncDmmr}. The second equality in \eqref{eq:sRjrzGncDmmr} follows from $y_i$--$z_i$ symmetry while the rest of  \eqref{eq:sRjrzGncDmmr} is clear by the first two equalities and the definition of $y$, $Y$, $z$, and $Z$.  This proves \eqref{eq:sRjrzGncDmmr}.

Next, we prove that $y$ is a VW-element. We know that $y_1$ is a VW-element, whence the inequality part of \eqref{eq:WrjRslGzT} gives that $0<y_1\leq y$. Since 
\[Y=Y_1\cup\dots\cup Y_k\subseteq H_1\cup\dots\cup H_k\subseteq \maxJir {\lalg D},
\]
\eqref{pbx:sZrkjNhZzafBrnsn} and the third equality of \eqref{eq:sRjrzGncDmmr} give that 
\[y  \overset{\eqref{eq:sRjrzGncDmmr}} = \bigvee Y \overset{\eqref{pbx:sZrkjNhZzafBrnsn}}= \bigvee\bigl(\maxJir {\lalg D}\cap \ideal y\bigr), 
\]
that is, $y$ satisfies condition \eqref{eq:WrjRslGzT}. Now we turn our attention to  \eqref{align:slTmrBma}--\eqref{align:slTmrBmb}. Let $p\in \maxJir {\lalg D}\cap \ideal y$. 
Since $p\leq y=y_1\vee\dots\vee y_k$, \eqref{eqpbx:wsFjERnQvCz} yields an $i$ such that $p\leq y_i$.  Applying \eqref{align:slTmrBma}--\eqref{align:slTmrBmb} to $y_i$, there is a  V-set or W-set $F$ of $y_i$ such that $p\in F$.  Since $y_i\leq y$, we have that
$F$ is a V-set or W-set of $y$, too. To show its uniqueness, assume that $F'$ is also a  V-set or W-set of $y$ such that $p\in F'$. The elements of $F'$ are in $\maxJir {\lalg D}\cap\ideal y$, so they are in $\maxJir {\lalg D}\cap\ideal x$ since $y\leq x$. Applying \eqref{pbx:sZrkjNhZzafBrnsn} to $x=\bigvee H$, we obtain that $F'\subseteq H$. Let $q$ be another element of $F'$. 
Since $F'$ is a V-set or W-set of $y$, it follows from Definition~\ref{def:MsetMMset} that there is a $t\in\set{1,2,3}$ and there are elements $r_0=p$, $r_1$, \dots, $r_{t-1}$, and $r_t=q$ 
\begin{equation}
\text{in $\maxJir {\lalg D}\cap \ideal y$ such that $\Jir {\lalg D}\cap\ideal r_{j-1}\cap \ideal r_j\neq \emptyset$}
\label{eq:sZrnLm}
\end{equation}
for $j\in\set{1,\dots, t}$. Since $y\leq x$, we can replace $\ideal y$ in \eqref{eq:sZrnLm} by $\ideal x$. After this replacement, \eqref{eq:sZrnLm} and the meaning of $\relE$ yield that $p$ and $q$ are in the same component of the graph $\tuple{\maxJir {\lalg D}\cap\ideal x,\relE}$. But $H$ is a \emph{spanned} multicircle, whereby $p$ and $q$ are in the same component $H_s$ ($s\in\set{1,\dots,k}$) of $H$. We know that $p\in H_i$, whereby $q$ is in $H_i$ as well. This holds for all $q\in F'$, whence $F'\subseteq H_i$. 
Since $H_i=\maxJir {\lalg D}\cap \ideal x_i$ by \eqref{eq:pRmrHpGnB} and $\bigvee H_i=x_i$, 
we have that $r\leq x_i$ for every $r\in F'$. We also have that $r\leq y$ since $F'$ is a V-set of W-set of $y$. Hence, 
\begin{equation}
r\leq x_i\wedge y=x_i\wedge (y_1\vee\dots\vee y_k)=
(x_i\wedge y_1)\vee\dots (x_i\wedge y_k).
\label{eq:wzKtJsDb}
\end{equation}
If we had that $r\leq y_j$ for some $j\in\set{1,\dots,k}\setminus\set i$, then we would have that $r\in H_i$ by  $F'\subseteq H_i$
and $r\in H_j$ by $r\leq y_j\leq x_j$ and \eqref{eq:pRmrHpGnB}, contradicting $H_i\cap H_j=\emptyset$. Hence, $r\not\leq y_j$ if $j\neq i$, whereby \eqref{eqpbx:wsFjERnQvCz} and \eqref{eq:wzKtJsDb} imply that
$r\leq x_i\wedge y_i=y_i$. So every $r\in F'$ belongs to $\maxJir {\lalg D}\cap \ideal y_i$ and it follows that $F'$ is a V-set or W-set of $y_i$. But $y_i$ is a VW-element, whereby there is only one V-set or W-set containing $p$. Thus, $F'=F$, implying the uniqueness of the V-set or W-set of $y$ containing $p$. We have shown that $y$ is a VW-element.

Since the role of $y$ and $z$ is symmetric, $z$ is a VW-element, too. Clearly,
\begin{equation}
\begin{aligned}
y\vee z&=(y_1\vee\dots\vee y_k) \vee (z_1\vee\dots\vee z_k) 
\cr
&= (y_1\vee z_1)\vee \dots \vee (y_k\vee z_k)= x_1\vee\dots\vee x_k=x.
\end{aligned}
\label{eq:szHjRwhBr}
\end{equation}
We claim that 
\begin{equation}
\maxJir {\lalg D}\cap \ideal y\cap \ideal z=\emptyset.
\label{eq:szbLwGcNrcPt}
\end{equation}
For the sake of contradiction, suppose that \eqref{eq:szbLwGcNrcPt} fails. 
Then we can pick an  $s\in \maxJir {\lalg D}$ such that $s\leq y$ and $s\leq z$. Observe that $Y_i\subseteq H_i\subseteq H\subseteq \maxJir {\lalg D}$. 
Since $s\leq y=y_1\vee\dots\vee y_k$, 
\eqref{eqpbx:wsFjERnQvCz} yields a subscript $i\in\set{1,\dots,k}$ such that $s\leq y_i$. Using \eqref{pbx:sZrkjNhZzafBrnsn}, $Y_i\subseteq \maxJir {\lalg D}$, and the first equality of  \eqref{eq:sRjrzGncDmmr},  we obtain that $s\in Y_i$. Similarly, we obtain a subscript $j\in\set{1,\dots, k}$ such that $s\in Z_j$. If we had that $i=j$, then \eqref{eq:sRjrzGncDmmr} would lead to $s\leq \bigvee Y_i\wedge \bigvee Z_i=y_i\wedge z_i$, contradicting that $y_i$ and $z_i$ were chosen for the cyclic element $x_i$ according to \eqref{eqpbx:nWpr}. Consequently, $i\neq j$. Hence $s\in Y_i$, $s\in Z_j$, $Y_i\subseteq H_i$, and $Z_j\subseteq H_j$  lead to $s\in H_i\cap H_j$, contradicting $H_i\cap H_j=\emptyset$. Thus, \eqref{eq:szbLwGcNrcPt} holds.  

Finally, \eqref{eq:szHjRwhBr}, \eqref{eq:szbLwGcNrcPt}, and the fact that $y$ and $z$ are VW-elements imply that $x$ satisfies property \eqref{eq:mcrDsPWbpsz}. We have shown that \eqref{eqpbx:nWpr} is equivalent to  \eqref{eq:mcrDsPWbpsz}. This proves part (C) of Theorem~\ref{thm:newproptY}. The proof of Theorem~\ref{thm:newproptY} is complete.
\end{proof}

Let $2\leq n\in\Nplu$. By the well-known structure theorem of finite distributive lattices, see Gr\"atzer~\cite[Theorem II.1.9]{GrGLT} for example, there is a unique distributive lattice $\lalg D_n$ with $\aJir {\lalg D_n}\cong{\alg K_n}$. (For $\alg K_8$, see Figure~\ref{figK8}.)
As in the proof of Theorem~\ref{thmnonax}, $J_0$ and $J_1$ still denote $\set{2,4,6,8,\dots}$ and $\set{3,5,7,9,\dots}$, respectively. The main idea of the proof of Theorem~\ref{thmnonax} was to show that $\set{\alg K_n: n\in J_0}$ 
cannot be distinguished from $\set{\alg K_n: n\in J_1}$ by a first-order formula.  
However, the Decomposable Cyclic Elements Property, which is equivalent to the first-order formula \eqref{eq:mcrDsPWbpsz}, holds in $\set{\lalg D_n: n\in J_0}$ but fails in $\set{\lalg D_n: n\in J_1}$. Thus we conclude the following.

\begin{remark}\label{rem:szplMndHcKzgTrs}
The method of Section~\ref{sectionlatt} is not appropriate to decide whether the class $\{\Con {\lalg L}: {\lalg L}$ is a slim semimodular lattice$\}$ is definable by finitely many axioms among finite lattices. This question remains an open problem.
\end{remark}

As a by-product of the  proof of Theorem~\ref{thmnonax}, see \eqref{eqpbx:swGjwmknHpRc}, we have the following.

\begin{remark}\label{rem:nWhWrlNsW}
There is no first-order formula in $\lang\ordsigma$ such that the poset $\aJir {\lalg D}$ of join-irreducible elements of a finite distributive lattice $\lalg D$  satisfies this formula if and only if $\lalg D$ satisfies the Bipartite Maximal Elements Property; see Definition~\ref{def:bzwBNrtmG}. 
\end{remark}

Before the present paper, seven properties of the congruence lattices $\Con {\lalg L}$ of slim semimodular lattices ${\lalg L}$ have been known. There are two in Gr\"atzer~\cite{gr:confork} and \cite{grVIII}, four in Cz\'edli~\cite{czglamps}, and one in Cz\'edli and Gr\"atzer~\cite{czggg3p3c}. In six out of these seven cases, the corresponding property of $\aJir{\Con {\lalg L}}$ is clearly given (or can trivially be given) by a first-order formula of $\lang\ordsigma$. Combining this first-order formula with 
$\rjir y 1$ from \eqref{align:rSmbLa}, it is easy to see that each of the six properties can be given by a single first-order condition  tailored to $\Con {\lalg L}$. The seventh property, proved in Cz\'edli~\cite{czglamps}, is the Bipartite Maximal Elements Property; see Definition~\ref{def:bzwBNrtmG} in the present paper. Now the following remark follows from Lemma~\ref{lemma:twocoverP}, parts (B) and (C) of Theorem~\ref{thm:newproptY}, and  the fact that finitely many formulas can be replaced by their conjunction.

\begin{remark}\label{rem:sNldGsxm}
There exists a single first-order formula in $\lang\ordsigma$ such that a finite (distributive) lattice ${\lalg D}$ satisfies this formula if and only if all the seven known properties of the congruence lattices of slim semimodular lattices hold in ${\lalg D}$.
\end{remark}

\subsection{Acknowledgment} I am grateful to Mike Behrisch, Manuel Bodirsky, Brian Davey, and Marcel Jackson for their bibliographic comments. I express my additional gratitude to Mike Behrisch and  Marcel Jackson for their comments on Ehrenfeucht-Fra\"\i ss\'e Games.

\end{document}